\newtheorem{theorem}{Theorem}[section]
\newtheorem{lemma}[theorem]{Lemma}
\newtheorem{proposition}[theorem]{Proposition}
\newtheorem{corollary}[theorem]{Corollary}
\newtheorem{definition}[theorem]{Definition}
\newtheorem{remark}[theorem]{Remark}
\def\PP{\mathbb{P}}\def\AA{\mathbb{A}}\def\RR{\mathbb{R}}
\def\CC{\mathbb{C}}\def\HH{\mathbb{H}}\def\OO{\mathbb{O}}
\def\ZZ{\mathbb{Z}}\def\QQ{\mathbb{Q}}\def\LL{\mathbb{L}}
\def\cA{{\mathcal A}}\def\cY{{\mathcal Y}}\def\cZ{{\mathcal Z}}
\def\cC{{\mathcal C}}
\def\cD{{\mathcal D}}
\def\cI{{\mathcal I}}
\def\cO{{\mathcal O}}
\def\cI{{\mathcal I}}
\def\ra{\rightarrow}\def\lra{\longrightarrow}
\def\fg{{\mathfrak g}}\def\fso{{\mathfrak{so}}}\def\ft{{\mathfrak{t}}}
\def\s{{\sigma}}\def\t{{\tau}}\def\iot{{\iota}}
\def\GW{{Gromov-Witten}}
\def\vlad#1{\textcolor{blue}{{\bf Vlad:} #1 {\bf }}}
\author{Vladimiro Benedetti}
\author{Laurent Manivel}
\title{The small quantum cohomology of the Cayley Grassmannian}
\date{}
\begin{document}

\maketitle 

\begin{abstract}
We compute the small cohomology ring of the Cayley Grassmannian, that parametrizes 
four-dimensional subalgebras of the complexified octonions. We show that all the 
Gromov-Witten invariants in the multiplication table of the Schubert classes are 
non negative and deduce Golyshev's conjecture $\cO$ 
holds true for this variety. We also check that the quantum cohomology is 
semisimple and that there exists, as predicted by Dubrovin's conjecture, an
exceptional collection  of maximal length in the derived category.
\end{abstract}

\section{The setup} 

The Cayley Grassmannian $CG$ is a closed subvariety of the complex 
Grassmannian $G(3,7)\simeq G(4,7)$, which can be described as follows:
\begin{enumerate}
\item either as the subvariety of $G(3,7)$ parametrizing (the imaginary parts of) the 
four-dimensional subalgebras of the complexified octonions, 
\item or as the zero-locus of a general section of the vector bundle $\wedge^3T^*$, 
where $T$ denotes the tautological bundle on $G(4,7)$. 
\end{enumerate}
The equivalence of these two descriptions comes from the facts that a global section of 
$\wedge^3T^*$ is a skew-symmetric three-form in seven variables, and that the stabilizer 
of a general such form is a copy of $G_2$, the automorphism group of the octonions. 

The reader will find in \cite{cg} more details on the geometry of the Cayley 
Grassmannian; let us just recall that it is a Fano eightfold of Picard number one
and index four. Its equivariant cohomology ring (with respect to a maximal torus in $G_2$) 
has been computed in \cite[4.2]{cg}
with the help of the classical localization techniques. These computations imply that
the ordinary cohomology ring is generated (over the rational numbers) by the hyperplane class $\s_1$ and a codimension 
two class $\s_2$. The even Betti numbers are $1,1,2,2,3,2,2,1,1$ and there is an
integral basis of 
{\it Schubert classes} for which we keep the notations of \cite{cg}. The so called Chevalley 
formula for the product by the hyperplane class  is encoded in the graph below, where the number
of edges between two classes is the coefficient of the larger degree class in the hyperplane product  of the other one. 
Moreover the rational cohomology ring is defined by two relations in degree five and six:

\begin{proposition}
The rational cohomology ring of $CG$ is 
$$H^*(CG,\QQ)=\QQ[\sigma_1,\sigma_2]/\langle \sigma_1^5-5\sigma_1^3\sigma_2+6\sigma_1\sigma_2^2,
16\sigma_2^3-27\sigma_1^2\sigma_2^2+9\sigma_1^4\sigma_2\rangle.$$
\end{proposition}

We will denote these two relations by $R_5$ and $R_6$. 
The full multiplication table is given in \cite[4.3]{cg}. 

\noindent {\it Beware!} There is a typo in this table, a coefficient $3$ in 
$\sigma'_5\sigma_2=3\s_7$ is missing. 

\setlength{\unitlength}{6mm}
\thicklines
\begin{picture}(20,10.3)(-1,-2.5)
\put(0,2){$\bullet$}\put(2,2){$\bullet$}
\put(8,2){$\bullet$}\put(14,2){$\bullet$}\put(16,2){$\bullet$}

\put(4,4){$\bullet$}\put(4,0){$\bullet$}\put(6,4){$\bullet$}\put(6,0){$\bullet$}
\put(10,4){$\bullet$}\put(10,0){$\bullet$}\put(12,4){$\bullet$}\put(12,0){$\bullet$}

\put(8,-2){$\bullet$}\put(8,6){$\bullet$}

\put(0.2,2.2){\line(1,0){2}}\put(4.2,4.1){\line(1,0){2}}\put(4.2,4.3){\line(1,0){2}}

\put(4.2,.2){\line(1,0){2}}
\put(10.2,4.1){\line(1,0){2}}\put(10.2,4.3){\line(1,0){2}}
\put(10.2,.2){\line(1,0){2}}\put(14.2,2.2){\line(1,0){2}}

\put(2.2,2.2){\line(1,1){2}}\put(2.2,2.2){\line(1,-1){2}}
\put(6.2,4.2){\line(1,1){2}}\put(6.2,4.2){\line(1,-1){2}}
\put(6.2,.1){\line(1,1){2}}\put(6.2,.1){\line(1,-1){2}}
\put(6.2,.3){\line(1,1){2}}\put(6.2,.3){\line(1,-1){2}}
\put(8.2,2.1){\line(1,-1){2}}\put(8.2,2.3){\line(1,-1){2}}
\put(8.2,6.2){\line(1,-1){2}}
\put(8.2,2.2){\line(1,1){2}}
\put(8.2,-1.7){\line(1,1){2}}\put(8.2,-1.9){\line(1,1){2}}
\put(12.2,4.2){\line(1,-1){2}}\put(12.2,.2){\line(1,1){2}}

\put(4.1,4.2){\line(1,-2){2}}\put(4.3,4.2){\line(1,-2){2}}
\put(4.1,.2){\line(1,2){2}}\put(4.3,.2){\line(1,2){2}}\put(4.2,.2){\line(1,2){2}}
\put(10.1,4.2){\line(1,-2){2}}\put(10.3,4.2){\line(1,-2){2}}\put(10.2,4.2){\line(1,-2){2}}
\put(10.1,.2){\line(1,2){2}}\put(10.3,.2){\line(1,2){2}}

\put(-.5,1.5){$\sigma_0$}\put(1.5,1.5){$\sigma_1$}\put(3.5,-.5){$\sigma_2$}
\put(3.5,4.6){$\sigma'_2$}\put(5.5,-.5){$\sigma_3$}\put(5.5,4.6){$\sigma'_3$}
\put(7.8,-2.5){$\sigma_4$}\put(7,2.1){$\sigma'_4$}\put(7.8,6.5){$\sigma''_4$}
\put(10.1,-.5){$\sigma_5$}\put(10.1,4.6){$\sigma'_5$}\put(12,-.5){$\sigma_6$}
\put(12,4.6){$\sigma'_6$}\put(14,1.5){$\sigma_7$}\put(16,1.5){$\sigma_8$}

\end{picture}

\centerline{\scriptsize{The Bruhat graph of $CG$}}

\section{How to compute the quantum cohomology ring}

\subsection{Deforming the cohomology ring} 

The small quantum cohomology ring of the Cayley Grassmannian, which we intend to determine, 
is a deformation of its ordinary cohomology ring. The quantum parameter $q$ has degree four, 
the Fano index of $CG$. The quantum products of two Schubert classes are of the form 
$$\s\s' =\s \cup \s' +\sum_{d>0}q^d\sum_\tau I_d(\s, \s',\tau)\tau ^\vee,$$
where the sum is over the Schubert classes $\tau$ whose Poincar\'e dual class has degree  
$\deg(\tau^\vee)=\deg(\s)+\deg(\s')-4d$ (recall from \cite{cg} that the basis of Schubert
classes is self-dual, up to order). Moreover $I_d(\s, \s',\tau)$ is the three-points 
degree $d$ Gromov-Witten invariant associated to the three Schubert classes $\s, \s', \tau$. 
Note that, since $CG$ has dimension eight, $d$ does never exceed four. 
\smallskip

A first useful observation is that, by the results of \cite{ts},
the small quantum cohomology ring has a presentation of the form 
$$QH^*(CG,\QQ)=\mathbb{Q}[\sigma_1,\sigma_2,q] / \langle R_5(q), R_6(q)\rangle,$$
where the relations  $R_5(q)$ and $R_6(q)$ are deformations of $R_5$ and $R_6$: in fact we can 
just consider the latter relations, and evaluate them in the quantum cohomology ring rather than in ordinary cohomology. For degree reasons we will obtain deformed relations of 
the form $$R_5(q)=R_5+qQ_1, \qquad  R_6(q)=R_6+qQ_2.$$
Since there is no term with degree in $q$ bigger than one, these relations are completely determined by  the Gromov-Witten invariants of degree one. 
In fact, as we are going to see, this  turns out to be true for the full quantum product.

\subsection{Degree one Gromov-Witten invariants are enough}

In order to determine the full quantum products, the natural strategy would be to:
\begin{enumerate}
\item find the quantum relations $R_5(q), R_6(q)$;
\item express the Schubert classes as polynomials in $\s_1, \s_2$ and $q$: these are the 
quantum Giambelli formulas;
\item compute the products of the Schubert classes by $\s_1$ and $\s_2$: these are the 
quantum Pieri classes. 
\end{enumerate}
The quantum products by $\s_1$ are given by the quantum Chevalley formulas. Because of the symmetries of the Gromov-Witten invariants, these products must be of the following form:

\begin{center}
\begin{tabular}{lll}
$\sigma_1\sigma_1$ & = & $\sigma_2+\sigma'_2$ \\
$\sigma_2\sigma_1$ & = & $\sigma_3+3\sigma'_3$ \\
$\sigma'_2\sigma_1$ & = & $2\sigma_3+2\sigma'_3$ \\
$\sigma_3\sigma_1$ & = & $2\sigma_4+2\sigma'_4+a_3q$ \\
$\sigma'_3\sigma_1$ & = & $\sigma'_4+\sigma''_4+a'_3q$ \\
$\sigma_4\sigma_1$ & = & $2\sigma_5+a_4q\sigma_1$ \\
$\sigma'_4\sigma_1$ & = & $2\sigma_5+\sigma'_5+a'_4q\sigma_1$ \\
$\sigma''_4\sigma_1$ & = & $\sigma'_5+a''_4q\sigma_1$ \\
$\sigma_5\sigma_1$ & = & $\sigma_6+2\sigma'_6+a_5q\sigma_2+b_5q\sigma'_2$ \\
$\sigma'_5\sigma_1$ & = & $3\sigma_6+2\sigma'_6+a'_5q\sigma_2+b'_5q\sigma'_2$ \\
$\sigma_6\sigma_1$ & = & $\sigma_7+a_5q\sigma_3+a'_5q\sigma'_3$ \\
$\sigma'_6\sigma_1$ & = & $\sigma_7+b_5q\sigma_3+b'_5q\sigma'_3$ \\
$\sigma_7\sigma_1$ & = & $\sigma_8+a_4q\sigma_4+a'_4q\sigma'_4+a''_4q\sigma''_4+a_7q^2$ \\
$\sigma_8\sigma_1$ & = & $a_3q\sigma_5+a'_3q\sigma'_5+a_7q^2\sigma_1.$
\end{tabular} 
\end{center}

\medskip
There are ten unknowns to compute; all of them are degree one Gromov-Witten invariants,
except $a_7$. Suppose we have computed them; then we can almost deduce the quantum
Giambelli formulas, because most Schubert classes belong to the image of the 
multiplication map by $\s_1$. To be precise, in ordinary cohomology the image of 
this multiplication map has codimension two; in order to generate the full cohomology ring, 
we just need to add for example $\s_2$ and $\s_4$, or $\s_2$ and $\s_2^2$. In particular,
from quantum Chevalley and the quantum $\s_2^2$, we will be able to deduce quantum 
Giambelli. Note that the computation of $\s_2^2$ only involves a degree one Gromov-Witten 
invariant. 

Once we have quantum Giambelli, we can compute inductively the products by $\s_2$, 
just using the fact that if a class $\s$ satisfies $\s=\tau\s_1$ for some other class $\tau$, then
$\s\s_2$ can obviously be deduced from $\tau\s_2$ and quantum Chevalley. The missing
ingredient is a formula for $\s_4\s_2$, or equivalently $\s_2^3$. Again the computation
of the quantum product $\s_4\s_2$ only involves degree one Gromov-Witten invariants. 

Finally, the only invariant we used previously which is not of degree one is $a_7$. 
From the quantum Giambelli formula in degree at most seven and the expression of $\sigma_7\sigma_1$ above, we can at this point express 
$\s_8$ in terms of $\s_1,\s_2, q, a_7$. But then we can deduce $\sigma_8\sigma_1$ as a linear combination of Schubert classes, and comparing with the formula above, 
this yields a non trivial equation in $a_7$. So $a_7$ is  
 also determined by the other coefficients. (In fact we will check that $a_7=0$.) We have proved:

\begin{lemma}\label{enough}
The quantum cohomology ring $QH^*(CG,\QQ)$ is determined by:
\begin{enumerate}
\item the quantum Chevalley formula, up to degree seven;
\item the quantum products $\s_2^2$ and $\s_4\s_2$. 
\end{enumerate}
In particular it is completely determined by degree one Gromov-Witten invariants.
\end{lemma}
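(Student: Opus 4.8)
The plan is to make explicit the reduction that the surrounding text already sketches, turning each informal "we can deduce" into a verifiable algebraic step. The key structural input is the presentation $QH^*(CG,\QQ)=\QQ[\sigma_1,\sigma_2,q]/\langle R_5(q),R_6(q)\rangle$ coming from \cite{ts}, together with the fact that the deformed relations have the shape $R_5+qQ_1$ and $R_6+qQ_2$ with $Q_1,Q_2$ forced by degree reasons to be low-degree polynomials in $\sigma_1,\sigma_2$. Since the ring is generated by $\sigma_1$ and $\sigma_2$ as a $\QQ[q]$-algebra, knowing how to multiply \emph{any} class by $\sigma_1$ and by $\sigma_2$ determines the entire product table; so the whole problem collapses to controlling these two multiplication operators.

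First I would record that the quantum Chevalley formula up to degree seven supplies every product $\tau\sigma_1$ for $\deg\tau\le 7$, and I would observe that these formulas, read off the Bruhat graph, express each Schubert class of degree $\le 5$ as $\sigma_1$ times a class of one lower degree, \emph{modulo} the two exceptional generators. Concretely, the image of multiplication by $\sigma_1$ has codimension two in ordinary cohomology, and adjoining $\sigma_2$ together with $\sigma_2^2$ (equivalently $\sigma_4$) repairs the deficit; this is exactly why item (2) of the lemma isolates $\sigma_2^2$ and $\sigma_4\sigma_2$. I would then run the quantum Giambelli induction: starting from $\sigma_1,\sigma_2$ and the quantum product $\sigma_2^2$, express successively $\sigma_2',\sigma_3,\sigma_3',\dots$ as polynomials in $\sigma_1,\sigma_2,q$ by inverting the Chevalley relations, using the relations $R_5(q),R_6(q)$ to eliminate high powers of $\sigma_1$ whenever they appear. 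The one genuine new datum beyond Chevalley is $\sigma_2^2$, whose computation, as noted, involves a single degree-one Gromov-Witten invariant.

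Next I would establish that multiplication by $\sigma_2$ is likewise determined. The mechanism is the identity: if $\sigma=\tau\sigma_1$ in quantum cohomology, then $\sigma\sigma_2=(\tau\sigma_2)\sigma_1$ by associativity and commutativity, so $\sigma\sigma_2$ follows from $\tau\sigma_2$ (lower degree) together with quantum Chevalley. This lets me propagate the $\sigma_2$-multiplication table upward by induction on degree, and the induction has exactly one base case not covered by the recursion, namely $\sigma_4\sigma_2$ (equivalently $\sigma_2^3$), which is item (2). Thus both generators act in a way pinned down by Chevalley plus the two products $\sigma_2^2$ and $\sigma_4\sigma_2$.

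The main obstacle, and the step deserving the most care, is the closing argument for $a_7$, the one coefficient that is \emph{not} a priori a degree-one invariant (it is the $q^2$ coefficient in $\sigma_7\sigma_1$). Here I would use that, by the point we reach degree eight, quantum Giambelli already expresses $\sigma_8$ as a polynomial in $\sigma_1,\sigma_2,q$ with $a_7$ appearing linearly as the only remaining unknown. Computing $\sigma_8\sigma_1$ two ways — once by multiplying that polynomial expression by $\sigma_1$ and reducing modulo $R_5(q),R_6(q)$, and once by comparing with the prescribed Chevalley form $a_3q\sigma_5+a_3'q\sigma_5'+a_7q^2\sigma_1$ — yields a single linear equation in $a_7$ whose coefficients are already-determined degree-one invariants. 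I would need to check this equation is nondegenerate (so that it pins $a_7$ rather than being vacuous); granting that, $a_7$ is forced, and in fact one verifies $a_7=0$. Assembling these steps, every entry of the quantum product table is expressed through Chevalley up to degree seven and the two products $\sigma_2^2,\sigma_4\sigma_2$, all of which reduce to degree-one Gromov-Witten invariants, which is precisely the assertion of the lemma.
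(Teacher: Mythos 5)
Your proposal is correct and follows essentially the same route as the paper's own argument: quantum Chevalley plus $\sigma_2^2$ yields quantum Giambelli, the identity $\sigma\sigma_2=(\tau\sigma_2)\sigma_1$ for $\sigma=\tau\sigma_1$ propagates the $\sigma_2$-products with $\sigma_4\sigma_2$ as the sole base case, and the two-fold computation of $\sigma_8\sigma_1$ pins down the one non-degree-one coefficient $a_7$. The only addition is your explicit caveat that the linear equation in $a_7$ must be nondegenerate, which the paper asserts as "non trivial" and confirms later by checking $a_7=0$ explicitly.
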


\subsection{Enumerativity}

Homogeneous varieties have the nice property that their Gromov-Witten invariants are enumerative: they can be effectively computed as numbers of rational curves touching suitable collections of Schubert varieties in general position. This is 
definitely no longer the case for non homogeneous varieties, where certain Gromov-Witten
invariants can be negative (and even certain classical intersection numbers). Fortunately, enumerativity  can be preserved in certain specific situations. We will mainly use the 
following result from \cite{gpps} (Theorem 3.3):

\begin{proposition}\label{enum}
Let $X$ be a smooth projective variety, with an action of a reductive group $G$ that has only finitely many orbits. Let $\gamma_1,\ldots ,\gamma_k$ be cohomology classes of degree bigger 
than one, represented by subvarieties $Y_1,\ldots ,Y_k$ of $X$ that are transverse to the orbits. 

Suppose moreover that the moduli space $M_{d,k}(X)$ of stable maps of genus zero and degree 
$d$ with $k$ marked points is irreducible, and such that the general such curve intersects the 
open orbit of $X$. 

Then the Gromov-Witten invariant $I_d(\gamma_1,\ldots ,\gamma_k)$ can be computed as 
the number of stable curves of degree $d$ that intersect general $G$-translates  of
$Y_1,\ldots ,Y_k$. 
\end{proposition}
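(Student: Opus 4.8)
The plan is to identify the Gromov--Witten invariant with an honest count of curves via a Kleiman--Bertini transversality argument suited to a variety carrying finitely many orbits. Writing $\mathrm{ev}_i\colon M_{d,k}(X)\to X$ for the evaluation at the $i$-th marked point and $\mathrm{ev}=(\mathrm{ev}_1,\ldots,\mathrm{ev}_k)$, the invariant is by definition
$$I_d(\gamma_1,\ldots,\gamma_k)=\int_{[M_{d,k}(X)]^{\mathrm{vir}}}\mathrm{ev}_1^*\gamma_1\cup\cdots\cup\mathrm{ev}_k^*\gamma_k.$$
The first step is to dispose of the virtual class. The irreducibility hypothesis, together with the assumption that a general stable map meets the open orbit $U$ --- where the infinitesimal $G$-action generates the tangent space and the deformation theory is therefore unobstructed --- is exactly what forces $M_{d,k}(X)$ to be generically smooth of the expected dimension. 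Hence $[M_{d,k}(X)]^{\mathrm{vir}}=[M_{d,k}(X)]$ and $I_d(\gamma_1,\ldots,\gamma_k)$ becomes the plain degree of $\mathrm{ev}^*(\gamma_1\times\cdots\times\gamma_k)$, which I aim to compute as a transverse intersection number.

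The heart of the argument is the transversality step, and here the obstruction is that $G$ does not act transitively on $X$, so Kleiman's classical theorem does not apply directly. I would instead stratify $X$ by its finitely many orbits and argue orbit by orbit: on each orbit $G$ acts transitively, so Kleiman--Bertini governs the restricted intersections, while the hypothesis that each $Y_i$ is transverse to the orbit stratification guarantees that a general translate $g_iY_i$ meets every orbit in the expected dimension. Propagating this through the evaluation maps --- concretely, by examining the incidence locus inside $G^k\times M_{d,k}(X)$ and applying generic smoothness to its projection to $G^k$ --- I would conclude that for a general choice of $(g_1,\ldots,g_k)$ the scheme $\mathrm{ev}^{-1}(g_1Y_1\times\cdots\times g_kY_k)$ is a finite reduced set of points.

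It then remains to see that these points are genuine, multiplicity-one contributions. Here the hypothesis that the general curve meets the open orbit enters a second time: combined with the transversality of the $Y_i$ to all orbits, it forces the finitely many solution curves, for general translates, to meet each $g_iY_i$ inside $U$, where $G$ is transitive and the curves deform freely. This excludes spurious contributions supported on the boundary of $M_{d,k}(X)$ or on curves confined to smaller orbits, and it makes each intersection transverse of multiplicity one. Assembling the three steps yields the claimed equality of $I_d(\gamma_1,\ldots,\gamma_k)$ with the number of degree $d$ stable maps through general $G$-translates of $Y_1,\ldots,Y_k$. I expect the second step to be the main difficulty: transversality over the non-homogeneous $X$ must be assembled from the orbit-wise transitive pieces, and one has to rule out that the boundary strata of the moduli space, where the evaluation maps degenerate, contribute to the count.
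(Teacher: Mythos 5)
The paper offers no proof of this proposition at all: it is imported verbatim from \cite{gpps} (Theorem 3.3), so there is no internal argument to compare yours against, and the relevant benchmark is the proof in that reference. Your sketch is essentially that proof: one extends Kleiman's transversality theorem to the finitely-many-orbits setting, the key point being that transversality of the $Y_i$ to \emph{every} orbit makes the fiber-dimension count in the incidence construction over $G^k$ uniform across orbits; this is what kills both the boundary of $M_{d,k}(X)$ and the locus of curves missing the open orbit (each is a proper closed subset of the irreducible moduli space, so for general translates the intersection with it has negative expected dimension and is empty), while generic smoothness over the open orbit, where the action is transitive, yields reducedness and multiplicity one at the surviving points. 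The one step you state more optimistically than it deserves is the unobstructedness claim: meeting the open orbit only gives that $f^*T_X$ is \emph{generically} generated by the global sections coming from the Lie algebra of $G$, and one must add that on $\mathbb{P}^1$ a generically globally generated bundle is globally generated, hence $H^1(f^*T_X)=0$ (with a separate remark, or an appeal to the irreducibility hypothesis, to reduce the general point of $M_{d,k}(X)$ to the case of irreducible domains); this is a fixable gap of rigor, not a flaw in the approach.
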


In the sequel we will only apply this statement to $d=1$ and $k=2$ or $k=3$. Of course
the Schubert varieties themselves are in general not transverse to the orbits of $CG$, 
so we need to be careful. We will show in the next section that general lines and general 
planes through a general point of $CG$ satisfy this hypothesis. 

Alternatively, we can use Schubert varieties on the ambient Grassmannian $G(4,7)$, since 
the homogeneity of the latter allows to put their general $PGL_7$-translates in general 
position with any finite collection of subvarieties, in particular with the orbits in $CG$
(and $CG$ itself). This means that the intersections with $CG$ of general Schubert varieties 
in $G(4,7)$ will have the required properties. 

The cohomology classes of these intersections are given by the restrictions of Schubert 
classes, which were computed in \cite[Proposition 4.7]{cg}. For future use we recall the 
results. 
Let us fix a complete flag $0=V_0\subset V_1\subset \cdots \subset V_7\cong \CC^7$.  For $\lambda=(\lambda_1\geq \cdots \geq\lambda_4)$ an integer  sequence with $\lambda_1\leq 3$ and $\lambda_4\geq 0$, we have the usual Schubert variety
\[
X_\lambda=\{ W\in G(4,7) \mbox{ s.t. }\dim(W\cap V_{3-\lambda_j+j})\geq j \mbox{ for }1\leq j\leq 4 \},
\]
of codimension $\sum_i \lambda_i$ inside $G(4,7)$. We denote its cohomology class
by $\tau_\lambda$. In particular $\tau_1$ is the hyperplane class, whose restriction to 
$CG$ is just $\sigma_1$. The remaining pull-backs, up to degree six, are as follows.

\begin{proposition}\label{res} Let $\iota : CG\hookrightarrow G(4,7)$ be the natural embedding. Then:
$$\begin{array}{c}
\iota^*\tau_2 = \sigma_2, \qquad  \iota^*\tau_{11} = \sigma'_2, \\
\iota^*\tau_3 = \sigma'_3, \qquad \iota^*\tau_{111} = \sigma_3,  \\
\iota^*\t_{1111}=\s_4, \qquad  \iota^*\t_{211}=\s_4+2\s'_4,  \\ 
\iota^*\t_{22}=\s_4+\s'_4+\s''_4, \qquad\iota^*\t_{31}=\s'_4+\s''_4,\\
\iota^*\t_{2111}=2\s_5, \qquad \iota^*\t_{221}=3\s_5+\s'_5, \qquad
\iota^*\t_{311}= \iota^*\t_{32}=\s_5+\s'_5, \\
\iota^*\t_{2211}=\s_6+3\s'_6, \qquad \iota^*\t_{222}=2\s_6+2\s'_6, \\
\iota^*\t_{321}=3\s_6+3\s'_6, \qquad  \iota^*\t_{33}=\iota^*\t_{3111}=\s_6+\s'_6. \\
\end{array}$$
\end{proposition}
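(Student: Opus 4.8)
The plan is to exploit that $\iota^*$ is a ring homomorphism, so that it suffices to determine it on generators of $H^*(G(4,7),\QQ)$ and then propagate by the Giambelli formula on the Grassmannian. Since $H^*(G(4,7),\QQ)$ is generated by the special Schubert classes $\tau_1,\tau_2,\tau_3$ (the Chern classes of the rank-three tautological quotient bundle $Q$), the whole statement reduces to computing the three restrictions $\iota^*\tau_1,\iota^*\tau_2,\iota^*\tau_3$. The first is given: $\iota^*\tau_1=\sigma_1$. For the other two, the ring-homomorphism property already produces strong constraints. Applying $\iota^*$ to the Pieri relation $\tau_1^2=\tau_2+\tau_{11}$ gives $\iota^*\tau_2+\iota^*\tau_{11}=\sigma_1^2$, which by the Chevalley formula equals $\sigma_2+\sigma'_2$; likewise the codimension-three Pieri relations force $\iota^*\tau_3+\iota^*\tau_{21}=\sigma_1\,\iota^*\tau_2$, and so on. Each $\iota^*\tau_\lambda$ thus lies in a small affine subspace of the corresponding graded piece of $H^*(CG,\QQ)$.

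Granting the two base identifications $\iota^*\tau_2=\sigma_2$ and $\iota^*\tau_3=\sigma'_3$, every formula in the statement then follows as a purely formal consequence. Combining the Giambelli and Pieri rules on $G(4,7)$ with the Chevalley formula on $CG$ recorded in the Bruhat graph, one computes for instance $\iota^*\tau_{11}=\sigma_1^2-\sigma_2=\sigma'_2$, then $\iota^*\tau_{111}=\sigma_1(\sigma'_2-\sigma_2)+\sigma'_3=\sigma_3$, and so on up to codimension six. I would carry this out degree by degree: at each step write the desired $\tau_\lambda$ as a polynomial in $\tau_1,\tau_2,\tau_3$, apply $\iota^*$, and reduce using the already-determined products by $\sigma_1$. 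Below the middle dimension this propagation is in fact forced: by hard Lefschetz the operator $\cup\,\sigma_1$ is injective on $H^{2i}(CG,\QQ)$ for $i\le 3$, so the Chevalley constraints together with the lower restrictions leave no freedom through codimension three.

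The main obstacle is to resolve the individual identifications that these formal constraints leave ambiguous, i.e.\ which Grassmannian class maps to which $CG$ Schubert class. The relation $\iota^*\tau_2+\iota^*\tau_{11}=\sigma_2+\sigma'_2$ does not by itself decide whether $\iota^*\tau_2$ is $\sigma_2$ or $\sigma'_2$, the same ambiguity recurs for $\tau_3$ versus $\tau_{21}$, and it becomes more serious at the middle dimension, where the three classes $\sigma_4,\sigma'_4,\sigma''_4$ must be separated while $\cup\,\sigma_1\colon H^8(CG,\QQ)\to H^{10}(CG,\QQ)$ is no longer injective. To pin these down I would compute intersection numbers on $CG$ inside the ambient Grassmannian: by the projection formula $\int_{CG}\iota^*\tau_\lambda\cup\iota^*\tau_\mu=\int_{G(4,7)}\tau_\lambda\cup\tau_\mu\cup[CG]$, where $[CG]=c_4(\wedge^3T^*)$ is the Euler class of the bundle cutting out $CG$. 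Expanding $c_4(\wedge^3T^*)$ in the Schubert basis of $G(4,7)$ via the splitting principle and evaluating these triple products by ordinary Schubert calculus yields numbers which, matched against the known self-dual Poincar\'e pairing of the $\sigma$-classes from \cite{cg}, single out the correct restriction in each case. The computation of $c_4(\wedge^3T^*)$ and the bookkeeping at and above the middle dimension are where the real work lies; equivalently, as in \cite[Proposition 4.7]{cg}, all the identifications can be read off at once from the explicit torus-localization description of the Schubert classes, which is the route I would ultimately prefer for reliability.
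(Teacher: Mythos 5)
Your proposal is correct in outline but takes a genuinely different route from the one the paper relies on. The paper does not reprove Proposition \ref{res}: it quotes \cite[Proposition 4.7]{cg} (correcting the interchange of $\iota^*\tau_2$ and $\iota^*\tau_{11}$ there), and the proof in that reference is class-by-class rather than generator-by-generator: for each $\tau_\lambda$ one computes $\deg(\iota^*\tau_\lambda)=\int_{G(4,7)}\tau_\lambda\,\tau_1^{8-|\lambda|}\,c_4(\wedge^3T^*)$ by Schubert calculus, notes that $\iota^*\tau_\lambda$ is the class of $X_\lambda\cap CG$ for a general translate, hence an effective cycle, hence a non-negative integral combination of the Schubert classes of $CG$, and concludes whenever the degree admits a unique such expression; the remaining ambiguous cases (notably in codimensions $4$ and $6$) are settled by intersecting with Schubert varieties of $CG$ of complementary dimension and using the self-duality of the basis. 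Your reduction to the two identifications $\iota^*\tau_2=\sigma_2$, $\iota^*\tau_3=\sigma'_3$, followed by purely formal propagation through the ring homomorphism $\iota^*$, Giambelli on $G(4,7)$, and the multiplication table of $H^*(CG,\QQ)$ from \cite{cg}, is a real economy and does work: for instance $\iota^*\tau_{22}=\sigma_2^2-\sigma_1\sigma'_3=\sigma_4+\sigma'_4+\sigma''_4$ and $\iota^*\tau_{211}=\sigma_1^2\sigma_2-\sigma_2^2-\sigma_1\sigma'_3=\sigma_4+2\sigma'_4$, in agreement with the statement. (Your hard Lefschetz remark, however, does no work: injectivity of $\cup\,\sigma_1$ cannot split $\iota^*\tau_2+\iota^*\tau_{11}=\sigma_2+\sigma'_2$ into its summands, and the propagation is forced by the ring structure alone.)

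The one genuine weak point is your disambiguation step. The numbers $\int_{G(4,7)}\tau_\lambda\tau_\mu\,c_4(\wedge^3T^*)=\int_{CG}\iota^*\tau_\lambda\cup\iota^*\tau_\mu$ are pairings of unknowns against unknowns, so ``matching against the Poincar\'e pairing of the $\sigma$'s'' is a quadratic system that is \emph{not} conclusive over $\QQ$: writing $\iota^*\tau_2=a\sigma_2+b\sigma'_2$, the degree equation $82a+100b=82$ and the equation $38a^2+88ab+56b^2=38$ coming from $\int_{G}\tau_2^2\tau_1^4\,c_4(\wedge^3T^*)$ have a second common solution $(a,b)=(-9/91,\,82/91)$. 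What makes the identification conclusive is precisely the ingredient you never invoke and which \cite{cg} uses tacitly: $\iota^*\tau_\lambda$ is represented by the intersection of $CG$ with a general $PGL_7$-translate of $X_\lambda$, hence is effective, and on the spherical variety $CG$ an effective class is a non-negative \emph{integral} combination of the Schubert classes. With non-negativity and integrality in hand, the degrees alone pin down your two base cases ($82=\deg\sigma_2$ forces $\iota^*\tau_2=\sigma_2$, and $16=\deg\sigma'_3$ forces $\iota^*\tau_3=\sigma'_3$), after which your propagation yields the whole proposition. Finally, \cite[Proposition 4.7]{cg} is not proved by torus localization (localization is used there to get the multiplication table), and the fact that it got $\tau_2$ versus $\tau_{11}$ wrong --- the erratum recorded right after the statement --- is a good indication that this identification step is exactly where the explicit effectivity argument is needed.
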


\noindent {\it Beware!} In \cite[Proposition 4.7]{cg}, $\iota^*\tau_2$ and 
$\iota^*\tau_{11}$ have been interchanged. 

 \section{A bit of geometry}
 
In this section we study lines and planes on $CG$. We denote by $\Omega$ a general three-form on 
the  seven dimensional vector space $V_7$. One can then find a basis of $V_7$ (a basis of 
eigenvectors for a maximal torus of the copy if $G_2$ that stabilizes $\Omega$) for which 
$\Omega$ can be written as
$$\Omega=v_0\wedge v_{\alpha}\wedge v_{-\alpha}+v_0\wedge v_{\beta}\wedge v_{-\beta}+
v_0\wedge v_{\gamma}\wedge v_{-\gamma}+v_{\alpha}\wedge v_{\beta}\wedge v_{\gamma}+v_{-\alpha}\wedge v_{-\beta}\wedge v_{-\gamma}.$$
Moreover there is a $G_2$-invariant quadratic form on $V_7$, that one can write as 
$$q=v_0^2+v_{\alpha}v_{-\alpha}+v_{\beta}v_{-\beta}+v_{\gamma}v_{-\gamma}.$$
See \cite[section 2.1]{cg} for more details. 

\subsection{Lines in the Cayley Grassmannian}

Recall that the variety $F_1(G)$ of lines in the Grassmannian $G$ is the flag variety $F(3,5,7)$ parametrizing flags of subspaces $V_3\subset V_5\subset V_7.$ Its two projections $p_3$ 
and $p_5$ onto $G(3,7)$ and $G(5,7)$ are locally trivial, with Grassmannians $G(2,4)$ and $G(3,5)$ as respective fibers. 

\begin{proposition}\label{lines}
The variety $F_1(CG)$ of lines in $CG$ is a smooth subvariety of $F_1(G)$, of the expected dimension $9$. 
\end{proposition}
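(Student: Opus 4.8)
The plan is to realize $F_1(CG)$ as a zero locus inside $F_1(G)$ and to control its dimension and smoothness via a Koszul/transversality argument. Recall that $CG$ is the zero locus in $G(4,7)$ of a general section $s$ of $\wedge^3 T^*$, where $T$ is the tautological rank-four bundle; dualizing, $CG$ is cut out by the three-form $\Omega\in\wedge^3 V_7^*$. A line $\ell\subset G$ corresponds to a flag $V_3\subset V_5$, and $\ell$ lies in $CG$ precisely when $\Omega$ vanishes on every four-plane $W$ with $V_3\subset W\subset V_5$. The first step is therefore to express this incidence condition as the vanishing of a section of an appropriate vector bundle $\cE$ on the flag variety $F_1(G)=F(3,5,7)$. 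Concretely, over a point $(V_3\subset V_5)$ the fiber of $\cE$ should collect the values of $\Omega$ on the pencil of four-planes $W$ between $V_3$ and $V_5$; using the universal flag I would identify $\cE$ with the bundle whose fiber is $\mathrm{Hom}(V_5/V_3,\wedge^3 V_3^*)\oplus(\text{a }\wedge^2\text{-term})$, or more cleanly write $\Omega|_\ell$ as a section of $\wedge^3 \cU_5^*/(\text{forms vanishing on all such }W)$, where $\cU_5$ is the tautological rank-five subbundle. The key bookkeeping is to pin down $\mathrm{rk}\,\cE$: one expects it to equal $\dim F_1(G)-9=12-9=3$, so that the expected dimension comes out to $9$.

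Once $F_1(CG)$ is presented as $Z(s_\cE)$ for the induced section $s_\cE$ of $\cE$ on the $12$-dimensional $F_1(G)$, the statement reduces to two assertions: that $s_\cE$ vanishes transversally (giving smoothness and the expected codimension $\mathrm{rk}\,\cE=3$), and that the vanishing locus is nonempty of exactly this dimension. For the transversality, the natural mechanism is a Bertini-type argument exploiting the fact that $\Omega$ is general. I would set up the incidence variety
\[
\cI=\{(\ell,\Omega)\in F_1(G)\times\PP(\wedge^3 V_7^*) : \ell\subset Z(\Omega)\},
\]
show that the first projection $\cI\to F_1(G)$ is a projective bundle (for fixed $\ell$, the conditions on $\Omega$ are linear, so the fiber is a linear subspace of $\PP(\wedge^3 V_7^*)$), and deduce that $\cI$ is smooth and irreducible of dimension $12+\dim\PP(\wedge^3V_7^*)-3$. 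Then a generic smoothness / generic transversality argument for the second projection $\cI\to\PP(\wedge^3V_7^*)$ shows that for general $\Omega$ the fiber $F_1(CG)=\iota^{-1}(\Omega)$ is smooth of the expected dimension $9$, provided the second projection is dominant with generically reduced fibers. The $G_2$-equivariance of the whole picture, together with the explicit normal form of $\Omega$ recalled above, can be used to verify dominance and to check the fiber dimension at the distinguished point corresponding to $\Omega$.

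The main obstacle I anticipate is twofold. First, correctly identifying the bundle $\cE$ and its rank: the incidence condition ``$\Omega$ vanishes on all four-planes between $V_3$ and $V_5$'' is not literally a single evaluation of $\Omega$, so one must sort out which components of $\Omega$ restricted to $V_5$ are forced to vanish, and confirm that this is a rank-three condition rather than, say, rank-four (which would make the expected dimension $8$ and contradict the claim). Second, and more seriously, transversality is a genuinely nontrivial point for a non-homogeneous zero locus: $F_1(G)$ is homogeneous under $PGL_7$ but the section $s_\cE$ comes from a \emph{fixed} general $\Omega$, and $G_2$ acts with the open orbit being only part of $F_1(G)$, so a clean ``generic section of a globally generated bundle'' argument is not immediately available. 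I would handle this by the incidence-variety route above, reducing everything to generic smoothness of the projection $\cI\to\PP(\wedge^3V_7^*)$; the remaining work is to exhibit a single $\Omega$ for which $F_1(CG)$ is smooth of dimension $9$, which can be done at the normal-form $\Omega$ by a direct local computation in the torus-weight coordinates. The dimension count itself (expected dimension $9$) is the easy part; establishing global smoothness—i.e. ruling out excess-dimensional or singular components of $F_1(CG)$—is where the real effort lies.
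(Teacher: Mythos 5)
Your overall strategy is the same as the paper's: realize $F_1(CG)$ as the zero locus of the section of a vector bundle on $F_1(G)$ determined by $\Omega$, and deduce smoothness from genericity of $\Omega$; your incidence variety $\cI$ is nothing but the standard proof of the Bertini-type statement the paper invokes. However, there is a genuine gap exactly at the step you yourself flag as ``the key bookkeeping'', and your guess there is wrong twice over. First, $F_1(G)=F(3,5,7)$ has dimension $16$, not $12$: you used $\dim G(3,7)=12$, forgetting the $4$-dimensional fiber $G(2,4)$ of the projection $p_3\colon F(3,5,7)\to G(3,7)$. Second, the condition that $\Omega$ vanish on every four-plane $W$ with $V_3\subset W\subset V_5$ is a rank-seven condition, not rank three: it amounts to $\Omega(V_3,V_3,V_3)=0$ (one condition) together with the vanishing of the induced map $V_5/V_3\to\wedge^2V_3^*$ (six conditions); in a basis adapted to the flag these are the seven coefficients $\Psi_{123}$ and $\Psi_{ijk}$ with $i,j\le 3<k\le 5$. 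The paper's bundle $E$ sits in the exact sequence $0\to \wedge^2T_3^*\otimes(T_5/T_3)^*\to E\to \wedge^3T_3^*\to 0$, hence has rank $6+1=7$, giving expected dimension $16-7=9$. Your formula $\mathrm{rk}\,\mathcal{E}=\dim F_1(G)-9$ is circular: it reads the rank off from the conclusion rather than computing it, and your two numerical errors cancel in the final fiber-dimension count only because of this circularity ($12-3=16-7$). Without an actual count of the conditions, the expected dimension $9$ is assumed, not proved.

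The second obstacle you anticipate---that a clean ``general section of a globally generated bundle'' argument is unavailable because $\Omega$ is fixed and $CG$ is not homogeneous---is not real, and seeing why is precisely the content of the paper's proof. By Borel--Weil, $H^0(F_1(G),\wedge^2T_3^*\otimes(T_5/T_3)^*)=0$ and $H^0(F_1(G),\wedge^3T_3^*)=\wedge^3V_7^*$, so $H^0(F_1(G),E)=\wedge^3V_7^*$: the sections of $E$ induced by three-forms are \emph{all} the sections. Moreover these sections generate every fiber of $E$, because the seven coefficients listed above can be prescribed freely; so $E$ is globally generated. Hence a general three-form is a general section of a globally generated bundle on the smooth projective variety $F_1(G)$, and generic smoothness yields that its zero locus $F_1(CG)$ is smooth of dimension $9$ (or empty, which is excluded since $CG$ contains lines). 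Note also that no local verification at the normal form of $\Omega$ is needed: generic three-forms form a single open $GL_7$-orbit in $\wedge^3V_7^*$, so ``general in the Bertini sense'' and ``the generic $\Omega$ defining $CG$'' coincide. Your incidence-variety route, with the fiber codimension corrected from $3$ to $7$ (so that $\dim\cI=16+34-7=43$ and the general fiber of $\cI\to\PP(\wedge^3V_7^*)$ has dimension $9$), proves the same statement by the same mechanism, just with the Bertini argument unwound.
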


\begin{proof} Let us denote by $T_3$ and $T_5$ the tautological bundles of rank $3$ and $5$ on $F_1(G)$. 
By restriction, the vector bundle $\wedge^2T^*_3\otimes T_5^*$ maps to $\wedge^2T^*_3\otimes T_3^*$. 
Let $E$ denote the pre-image of $\wedge^3T^*_3\subset\wedge^2T^*_3\otimes T_3^*$. There is an exact 
sequence 
$$ 0\rightarrow \wedge^2T^*_3\otimes (T_5/T_3)^*\rightarrow E\rightarrow \wedge^3T^*_3\rightarrow 0.$$
By the Borel-Weil theorem $H^0(F_1(G),\wedge^2T^*_3\otimes (T_5/T_3)^*)=0$ and  $H^0(F_1(G),\wedge^3T^*_3)=
\wedge^3V^*_7$. Therefore  $H^0(F_1(G),E)=\wedge^3V^*_7$. Our three-form $\Omega$ thus defines a general section of $E$, whose zero-locus is exactly $F_1(CG)$. The statement follows 
from generic smoothness since $E$ is globally generated. Indeed, let us choose a basis 
$e_1, \ldots , e_7$ of $V_7$ such that  $e_1, \ldots , e_3$ 
is a basis of $T_3$ and $e_1, \ldots , e_5$ a basis of $T_5$. By choosing arbitrarily the coefficients $\Psi_{123}$
and $\Psi_{ijk}$ for $i,j\le 3<k\le 5$ of a three-form $\Psi$, which we can do freely, we generate the whole fiber of $E$. \end{proof}

\medskip Now let us consider the set of lines in the Cayley Grassmannian passing through 
a given point $p\in CG$, corresponding to $A_4\subset V_7$. Such a line is given by a 
pair $(V_3,V_5)$ such that $V_3\subset A_4\subset V_5$ and $\Omega(V_3,V_3,V_5/A_4)=0$
(note that $\Omega(V_3,V_3,A_4)=0$ since $\Omega$ vanishes on $A_4$). Focusing on $V_5$, 
consider the map
$$\theta : V_5/A_4 \longrightarrow \wedge^2A_4^*$$
induced by $\Omega$. Note that $\theta$ cannot be zero, since otherwise $\Omega$ would vanish 
on $V_5$. This condition would define a codimension ten subvariety of $G(5,V_7)$, stable 
under $G_2$, hence a collection of fixed points: but there is none. So for a suitable $V_3$ 
to exist, we need the image of $\theta$ to be generated by a rank two form; then $V_3$ 
must contain the kernel of this rank two form, hence varies in a $\PP^1$. 

One can compute explicitly $\theta$ at three points $p$ representing the three orbits in $CG$. 
The conclusion is that the locus in $\PP(V_7/A_4)$ defined by the condition that $\theta$
drops rank is either:
\begin{enumerate}
\item a smooth conic if $p$ belongs to the open orbit,  
\item a reducible conic if $p$ belongs to the codimension one orbit,  
\item the whole plane if $p$ belongs to the closed orbit.  
\end{enumerate}
From this we deduce: 
\newpage

\begin{proposition}\label{lines2}
\hspace*{2cm}\hfill
\begin{enumerate}
\item The variety of lines passing through a general point $p=[A_4]$ of $CG$ is a copy of $\PP^1\times\PP^1$, embedded inside $\PP(A_4^*)\times\PP(V_7/A_4)$ by a linear system
of type $|\mathcal{O}(1,1)|\otimes |\mathcal{O}(2,0)|$. 
\item The variety $F_1(CG)$ is irreducible. 
\end{enumerate}
\end{proposition}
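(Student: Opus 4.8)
The plan is to treat the two assertions in turn, using the reduction to the linear map $\theta$ carried out just before the statement.

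\emph{Part (1).} I regard $\theta$ as the map $\theta\colon V_7/A_4\to\wedge^2A_4^*$, $\bar v\mapsto\Omega(\,\cdot\,,\,\cdot\,,v)|_{A_4}$, which is well defined because $\Omega$ vanishes on $A_4$. As explained above, a pair $(V_3,V_5)$ with $V_3\subset A_4\subset V_5$ and $V_5/A_4=\langle\bar v\rangle$ defines a line in $CG$ through $p$ precisely when the two-form $\theta(\bar v)$ has rank at most two and $V_3$ contains its radical. Since the rank-$\le 2$ locus in $\PP(\wedge^2A_4^*)$ is the Pfaffian quadric, the admissible classes $[\bar v]$ form the conic $C=\{[\bar v]:\theta(\bar v)\wedge\theta(\bar v)=0\}\subset\PP(V_7/A_4)=\PP^2$. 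For $p$ in the open orbit the explicit form of $\theta$ shows that $C$ is a smooth conic on which $\theta$ has constant rank two; over each point its radical $R(\bar v)\subset A_4$ is two–dimensional, and the admissible $V_3$ are the hyperplanes of $A_4$ containing $R(\bar v)$, that is, the line $\PP(\mathrm{Ann}\,R(\bar v))\subset\PP(A_4^*)$. This exhibits the surface $S$ of lines through $p$ as a $\PP^1$-bundle over $C\cong\PP^1$. It then remains to identify $S$ with $\PP^1\times\PP^1$ and to compute the two polarizations. The projection $S\to\PP(V_7/A_4)$ contracts the rulings of admissible $V_3$ onto $C$, so the hyperplane class of $\PP(V_7/A_4)$ restricts to $\mathcal{O}(2,0)$ (degree two along the base $C$, taken as the first factor). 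The projection $S\to\PP(A_4^*)$ is linear on each ruling, so the hyperplane class of $\PP(A_4^*)$ restricts to $\mathcal{O}(c,1)$ for some $c$; the assertion is that $c=1$ and, equivalently, that the $\PP^1$-bundle is trivial. I would read both facts off the coordinate expression of $\theta$ at an open–orbit representative $A_4$, using the $G_2$-adapted basis of $\Omega$ recalled in Section 3: parametrising $C\cong\PP^1$ and computing $R(\bar v)$ as an explicit function of the parameter displays $S$ as the image of $\PP^1\times\PP^1$ under $|\mathcal{O}(1,1)|\otimes|\mathcal{O}(2,0)|$, the first map being the Segre embedding into $\PP(A_4^*)$.

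\emph{Part (2).} I would deduce irreducibility from the incidence variety
$$I=\{(p,\ell)\in CG\times F_1(CG):p\in\ell\}.$$
The projection $I\to F_1(CG)$ is a $\PP^1$-bundle, so $I$ is irreducible if and only if $F_1(CG)$ is; moreover by Proposition~\ref{lines} the variety $F_1(CG)$ is smooth of pure dimension nine, whence $I$ is smooth of pure dimension ten and its irreducible components are pairwise disjoint. Let $U$ be the open orbit and $\pi\colon I\to CG$ the second projection. Over $U$, which is a single $G_2$-orbit, the map $\pi$ is a $G_2$-homogeneous fibration with fibre the surface $\PP^1\times\PP^1$ of part~(1); as both base and fibre are irreducible, $\pi^{-1}(U)$ is irreducible of dimension $8+2=10$. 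It therefore suffices to show that $\pi^{-1}(U)$ meets every component of $I$, for which I bound $\dim\pi^{-1}(CG\setminus U)<10$: on the codimension-one orbit (dimension seven) the conic $C$ is reducible of dimension one, so $\dim F_p=2$ and this stratum contributes dimension $7+2=9$; on the closed orbit, which being contained in the boundary of the seven–dimensional orbit has dimension at most six, the rank of $\theta$ drops to two on the whole plane $C=\PP^2$, so $\dim F_p=3$ and this stratum contributes at most $6+3=9$. Hence $\dim\pi^{-1}(CG\setminus U)\le 9<10$, and every ten–dimensional component of $I$ must meet the irreducible set $\pi^{-1}(U)$. Since the components of the smooth variety $I$ are disjoint while $\pi^{-1}(U)$ is irreducible, there is a single component, so $I$, and hence $F_1(CG)$, is irreducible.

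The main obstacle is the explicit identification in part~(1): turning the abstract $\PP^1$-bundle structure over the conic into the precise statement $S\cong\PP^1\times\PP^1$ with polarizations $\mathcal{O}(1,1)$ and $\mathcal{O}(2,0)$ requires the coordinate computation of $\theta$ and of its varying radical at a general point. Once this is in hand, and once the analogous computations at the other two orbit representatives supply the fibre dimensions used above, part~(2) follows formally from the dimension count.
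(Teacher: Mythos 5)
Your proposal is correct and follows the paper's own argument essentially step for step: part (1) is the same direct computation the paper invokes (the $\PP^1$-bundle over the Pfaffian conic, with the final identification of the polarizations deferred to coordinates, exactly as the paper defers it), and part (2) is precisely the paper's incidence-variety argument — the $\PP^1$-bundle projection to $F_1(CG)$ giving smoothness in dimension ten, irreducibility of the preimage of the open orbit, and the observation that the fibers over the two smaller orbits (surfaces and threefolds) are too small to support another ten-dimensional component. The only difference is presentational: you make explicit the dimension count ($7+2=9$ and $\leq 6+3=9$) and the disjointness-of-components step that the paper leaves implicit.
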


\begin{proof} The first assertion is the result of a direct computation. For the second assertion,
consider the point-line incidence variety $I\subset CG\times F_1(CG)$. On the one hand, its
projection to $F_1(CG)$ is a $\PP^1$-bundle, so $I$ is smooth of dimension $10$, and irreducible 
if and only if $F_1(CG)$ is irreducible. On the other hand, the fibers of its projection to
$CG$ are smooth quadratic surfaces over the open orbit, and surfaces or threefolds over the 
other points. This implies that the preimage in $I$ of the open orbit is irreducible, and 
what remains is too small to generate another dimension ten component. \end{proof}

\smallskip\noindent {\it Remark}. 
Recall from \cite{cg} 
that the stabilizer of a general point in $CG$ is isomorphic to $SL_2\times SL_2$. 
This stabilizer acts transitively on the quadratic surface that parametrizes the lines 
through this point. As a 
consequence, $G_2$ has an open orbit in $F_1(CG)$. More precisely, $G_2$ acts transitively on the 
space of lines meeting the open orbit of $CG$.

\subsection{Planes in the Cayley Grassmannian}

In order to simplify the computations of certain Gromov-Witten invariants, it will
be useful to understand the planes in the Cayley Grassmannian. Indeed the two degree
six Schubert classes $\s_6$ and $\s'_6$ both represent planes contained in $CG$. 

The Grassmannian $G=G(4,7)$ containes two different kind of planes, $\alpha$-planes parametrized 
by $F(3,6,7)$ and $\beta$-planes parametrized by  $F(2,5,7)$. A $\beta$-plane is made of spaces contained in a codimension two subspace of $V_7$, so it cannot meet a class  $\iota^*\t_2=\s_2$. Since according to \cite{cg} $\s_6\s_2\ne 0$ but $\s'_6\s_2=0$, we deduce:

\begin{lemma}
The class of an $\alpha$-plane in $CG$ is $\s_6$,  the 
class of a $\beta$-plane is $\s'_6$.
\end{lemma}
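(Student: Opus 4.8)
The plan is to decide the type of each of the two degree-six planes by pairing its class against $\sigma_2=\iota^*\tau_2$. Since the $\alpha$-planes and $\beta$-planes are the only planes in $G(4,7)$, each of the (irreducible) planes representing the basis classes $\sigma_6$ and $\sigma'_6$ is of exactly one of these two types, so the whole statement reduces to deciding which type has vanishing intersection number with $\sigma_2$. The inputs I will use are, on the one hand, the explicit values $\sigma_6\sigma_2\neq 0$ and $\sigma'_6\sigma_2=0$ from the multiplication table of \cite{cg}, and on the other hand a direct geometric computation of the intersection of each type of plane with a general translate of $\tau_2$.

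First I would make the $\beta$-plane computation precise. The class $\tau_2$ is represented by the Schubert variety $X_{(2)}=\{W:\dim(W\cap V_2)\geq 1\}$ of $G(4,7)$, while a $\beta$-plane consists of the $4$-spaces $W$ with $V'_2\subset W\subset V'_5$ for fixed subspaces $V'_2\subset V'_5$; in particular every such $W$ is contained in the codimension-two subspace $V'_5$. For a general translate of $X_{(2)}$ one has $V_2\cap V'_5=0$, whence $W\cap V_2\subset V'_5\cap V_2=0$ for every member $W$ of the $\beta$-plane, so a general translate of $X_{(2)}$ is disjoint from the $\beta$-plane. By Kleiman transversality for the $PGL_7$-action on $G(4,7)$ this computes the intersection number, giving $[\beta\text{-plane}]\cdot\sigma_2=0$. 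Writing the $\beta$-plane class as $a\sigma_6+b\sigma'_6$ and pairing with $\sigma_2$ now yields $a\,(\sigma_6\sigma_2)=0$, hence $a=0$; since the $\beta$-plane is an irreducible plane, comparison with $\sigma'_6$ (for instance via the dual pairing with $\sigma'_2=\iota^*\tau_{11}$, which under the self-dual Schubert basis pairs to $1$ with $\sigma'_6$) forces $b=1$, so the $\beta$-plane has class $\sigma'_6$.

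For the complementary statement I would run the analogous count for an $\alpha$-plane $\{W:V'_3\subset W\subset V'_6\}$. For a general translate of $X_{(2)}$ the intersection $V_2\cap V'_6$ is a single line $\langle u\rangle$ with $u\notin V'_3$, and the condition $u\in W$ singles out the unique $W$ corresponding to $\langle\bar u\rangle\subset V'_6/V'_3$; thus $[\alpha\text{-plane}]\cdot\sigma_2=1\neq 0$. Hence the $\alpha$-plane class has nonzero $\sigma_6$-component and, being the remaining plane class, equals $\sigma_6$, which completes the identification. The only delicate points are the transversality assertions that license reading these geometric (non)intersections as the intersection numbers with $\sigma_2$, and the verification that each plane class is a single basis element rather than a combination; both are handled by Kleiman's theorem on the homogeneous space $G(4,7)$ together with the effectivity of the plane classes and the explicit values in \cite{cg}, and everything else is the short linear-algebra deduction above.
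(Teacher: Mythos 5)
Your proof is correct and follows essentially the same route as the paper: both hinge on the observation that a $\beta$-plane consists of subspaces lying in a fixed codimension-two subspace and therefore misses a general translate of the $\tau_2$-cycle, combined with the values $\sigma_6\sigma_2\neq 0$ and $\sigma'_6\sigma_2=0$ from \cite{cg}. Your direct intersection count for the $\alpha$-plane and the explicit coefficient bookkeeping only add detail to the paper's terser elimination argument (note that the cleanest way to pin down $b=1$, rather than appealing to irreducibility, is that a linearly embedded plane has degree one, so $a+b=[P]\cdot\sigma_1^2=1$).
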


Let us discuss these planes separately. 

\begin{proposition}\label{aplanes}
The family of $\alpha$-planes in $CG$ is parametrized by the quadric $\QQ^5$. There is no   $\alpha$-plane through the general point of  $CG$.
\end{proposition}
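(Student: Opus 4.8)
The plan is to identify the family of $\alpha$-planes contained in $CG$ with the zero locus of a section of an appropriate bundle over the parameter space $F(3,6,7)$ of $\alpha$-planes in $G(4,7)$, and then to compute this zero locus explicitly. Recall that an $\alpha$-plane corresponds to a flag $V_3\subset V_6\subset V_7$, and consists of all four-dimensional subspaces $W$ with $V_3\subset W\subset V_6$. For such a plane to lie in $CG$, every such $W$ must satisfy $\Omega|_{\wedge^3 W}=0$. First I would translate this pointwise condition into a vector bundle condition on $F(3,6,7)$: letting $U_3\subset U_6$ denote the tautological subbundles, I would show that the section of $\wedge^3 T^*$ defining $CG$ restricts to a section of a bundle $\cE$ on $F(3,6,7)$ whose vanishing is equivalent to the containment of the whole $\alpha$-plane in $CG$. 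The natural guess is that the condition $\Omega(\wedge^3 W)=0$ for all $W$ between $V_3$ and $V_6$ amounts to requiring $\Omega$ to vanish on $\wedge^3 V_6$ restricted by the $V_3$-data, giving a bundle built from $\wedge^2 U_3^*\otimes (U_6/U_3)^*$ together with $\wedge^3 U_3^*$ and $U_3^*\otimes\wedge^2(U_6/U_3)^*$.

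Second, I would compute the rank of this bundle $\cE$ and check that it equals the dimension of $F(3,6,7)$ minus $5$, which is the expected setup for the zero locus to be a fivefold; since $\dim F(3,6,7)=3\cdot 4+2\cdot 3=18$ would give the wrong count, I would instead carefully recompute $\dim F(3,6,7)=\dim G(3,7)+\dim G(3,4)=12+3=15$, so a rank-$10$ bundle yields a fivefold, matching $\QQ^5$. The key structural step is then to recognize the zero locus geometrically as a $G_2$-homogeneous space. Since $\Omega$ is $G_2$-invariant and the whole construction is $G_2$-equivariant, the family of $\alpha$-planes in $CG$ is a closed $G_2$-stable subvariety of $F(3,6,7)$. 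I would argue that this subvariety is a single $G_2$-orbit, and identify it with the five-dimensional quadric $\QQ^5=G_2/P$, the highest weight orbit in $\PP(V_7)$, by matching the incidence data: an $\alpha$-plane in $CG$ should be determined by a single $G_2$-null direction, and the null quadric in $\PP(V_7)$ defined by $q$ is exactly $\QQ^5$.

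For the second assertion, that no $\alpha$-plane passes through the general point of $CG$, I would use a dimension count on the incidence variety. Let $J\subset CG\times \QQ^5$ be the incidence variety of pairs (point of $CG$, $\alpha$-plane containing it). Its projection to $\QQ^5$ has two-dimensional fibers (the plane itself), so $\dim J=5+2=7$. Since $\dim CG=8>7$, the projection $J\to CG$ cannot be dominant, so the general point of $CG$ lies on no $\alpha$-plane. This part is routine once the identification of the parameter space with $\QQ^5$ is in hand.

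The main obstacle will be the explicit computation identifying the $G_2$-stable zero locus with $\QQ^5$ rather than merely bounding its dimension. Establishing that the zero locus is irreducible and a single orbit, as opposed to carrying a scheme structure with the correct dimension but unexpected components, requires either a direct orbit analysis using the explicit form of $\Omega$ in the eigenbasis $v_0,v_{\pm\alpha},v_{\pm\beta},v_{\pm\gamma}$, or a Borel--Weil style vanishing argument together with generic smoothness to pin down the fundamental class. I expect the cleanest route is to exhibit one $\alpha$-plane in $CG$ explicitly, compute its $G_2$-stabilizer, verify that the orbit has dimension five, and then show by the equivariant bundle argument that there are no other components; matching the orbit to $\QQ^5=G_2/P$ for the appropriate parabolic $P$ then completes the identification.
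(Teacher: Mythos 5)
Your incidence-variety count for the second assertion is fine, and (granting the first assertion) is even simpler than the paper's argument, which proves something stronger: every point $A$ of every $\alpha$-plane in $CG$ satisfies $V_6^\perp\subset A\subset V_6$ with $V_6^\perp$ isotropic, so the invariant quadratic form degenerates on $A$ and $A$ avoids the open orbit. The genuine gap is in the first assertion. Identifying the zero locus of your bundle with $\QQ^5$ requires proving irreducibility, and nothing in your proposal accomplishes this: exhibiting one $\alpha$-plane and computing its $G_2$-stabilizer only shows that the zero locus \emph{contains} one five-dimensional orbit, and generic smoothness (which does apply here, by the same global-generation argument the paper uses for the variety of lines) only shows that every component is smooth of dimension five --- it does not exclude additional components, since a smooth $G_2$-stable locus may well be disconnected. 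Your phrase ``show by the equivariant bundle argument that there are no other components'' is a placeholder, not an argument; to make it one you would have to, say, compute the top Chern class of the rank-ten bundle on $F(3,6,7)$ and match it against the class of the orbit closure, which you never do. There is also an internal inconsistency in your setup: the bundle you guess contains the summand $U_3^*\otimes\wedge^2(U_6/U_3)^*$, giving rank $9+1+9=19$, while you later (correctly) demand rank $10$. Since $\wedge^3W=\wedge^3V_3\oplus(\wedge^2V_3\wedge L)$ for $W=V_3\oplus L$, the containment of the whole plane in $CG$ is exactly $\Omega(V_3,V_3,V_3)=0$ and $\Omega(V_3,V_3,V_6)=0$, so the correct bundle is the rank-ten extension of $\wedge^3U_3^*$ by $\wedge^2U_3^*\otimes(U_6/U_3)^*$.

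The missing idea, which is what makes the paper's proof a half page, is to fiber over $V_6$ rather than analyze the zero locus abstractly. Hyperplanes $V_6\subset V_7$ form exactly two $G_2$-orbits, according to whether the line $V_6^\perp$ is isotropic. The condition $\Omega(V_3,V_3,V_6)=0$ says precisely that $\omega=\Omega|_{V_6}$ lies in $\wedge^2V_3^\perp\wedge V_6^*$, the affine tangent space to $G(3,V_6^*)$ at the point $V_3^\perp$. If $V_6^\perp$ is non-isotropic (e.g.\ $\langle e_0\rangle$), then $\omega$ is a \emph{generic} three-form in six variables, hence lies on no tangent space of that Grassmannian, and no $\alpha$-plane with this $V_6$ lies in $CG$; if $V_6^\perp$ is isotropic (e.g.\ $\langle e_\alpha\rangle$), a short explicit computation shows there is exactly one admissible $V_3^\perp$, namely $\langle v_0,v_\beta,v_\gamma\rangle$. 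This at once gives a bijection between $\alpha$-planes in $CG$ and isotropic points of $\PP(V_7)$, i.e.\ $\QQ^5$, with irreducibility coming for free, and it hands you the containment $V_6^\perp\subset A\subset V_6$ used for the second assertion. Without this reduction, or an honest substitute for the irreducibility step, the first assertion --- and therefore your conditional proof of the second --- remains unproved.
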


\begin{proof} An $\alpha$-plane is defined by a pair $(V_3\subset V_6)$ of subspaces of $V_7$. 
It is contained in $CG$ if and only if $\Omega(V_3,V_3,V_3)=0$ and $\Omega(V_3,V_3,V_6)=0$. 
If we denote by $\omega$ the restriction of $\Omega$ to $V_6$, the latter condition 
means that $\omega$ belongs to $\wedge^2V_3^\perp\wedge V_6^*$, which is the 
tangent space to the Grassmannian $G(3,V_6^*)$ at $V_3^\perp$. In particular $\omega$
is not a generic three-form. 

There are only two $G_2$-orbits of hyperplanes in $V_7$: the orthogonal 
line (with respect to the invariant quadratic form) 
can be isotropic or not. Let us choose representatives of these  orbits. 

A non isotropic vector is $e_0$, its orthogonal being $V_6=\langle e_\alpha, 
e_\beta, e_\gamma, e_ {-\alpha}, e_{-\beta}, e_ {-\gamma}\rangle$. The restriction of
$\Omega$ to this hyperplane is $\omega = v_\alpha\wedge v_\beta\wedge v_\gamma+ v_ {-\alpha}
\wedge v_{-\beta}\wedge v_{-\gamma}$, a generic three-form. 

An isotropic vector is $e_\alpha$, its orthogonal being $V_6=\langle e_0, e_\alpha, 
e_\beta, e_\gamma, e_{-\beta}, e_ {-\gamma}\rangle$. The restriction of
$\Omega$ to this hyperplane is $\omega = v_0\wedge v_{\beta}\wedge v_{-\beta}+v_0
\wedge v_{\gamma}\wedge v_{-\gamma}+v_{\alpha}\wedge v_{\beta}\wedge v_{\gamma}$, 
which belongs to $\wedge^2V_3^\perp\wedge V_6^*$ only for 
$V_3^\perp=\langle v_0, v_{\beta}, v_{\gamma}\rangle$. 
Note that $V_3$ contains $e_\alpha$, so that any point $A$ in the $\alpha$-plane defined
by $V_6$ verifies $V_6^\perp\subset A\subset V_6$. As a consequence, the restriction 
of the invariant quadratic form on $A$ must be degenerate; equivalently, by \cite[Proposition 3.1]{cg}, $A$ does not belong to the open orbit in $CG$. \end{proof}

\begin{proposition}\label{bplanes}
The family  of $\beta$-planes in $CG$ has dimension seven. There is a conic of   $\beta$-planes
passing through the general point of  $CG$. Moreover, a generic  $\beta$-plane is transverse
to the orbit stratification.
\end{proposition}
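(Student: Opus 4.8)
The plan is to translate the containment of a $\beta$-plane into a condition on the defining three-form $\Omega$, and then to run the analysis through the $G_2$-invariant geometry as in the previous propositions. A $\beta$-plane is a flag $V_2\subset V_5$ in $V_7$, the plane being $\{W : V_2\subset W\subset V_5\}\subset G(4,7)$. Writing $\omega:=\Omega|_{V_5}$ and splitting $V_5=V_2\oplus U$, one decomposes $\omega$ into its components with $0,1,2$ arguments in $V_2$; since any $W$ meets $V_2$ in a $2$-dimensional and $U$ in a $2$-dimensional space, $\omega|_W=0$ for all such $W$ if and only if $\omega$ has no component involving $V_2$, i.e. $\iota_v\omega=0$ for all $v\in V_2$. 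Thus the $\beta$-plane lies in $CG$ precisely when $V_2\subseteq\ker(\Omega|_{V_5})$.

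Now I invoke the $G_2$-structure: for $v\neq0$ the two-form $\iota_v\Omega$ has rank $6$ when $q(v)\neq0$ and rank $4$ when $q(v)=0$ (both are checked on the explicit $\Omega$, e.g. $\iota_{e_0}\Omega$ and $\iota_{e_\alpha}\Omega$). A five-dimensional subspace can be isotropic only for a two-form of rank at most four, so the condition $(\iota_v\Omega)|_{V_5}=0$ forces $q|_{V_2}=0$; hence $V_2$ is an isotropic two-plane, $V_2\in OG(2,7)$, a smooth irreducible variety of dimension $7$. For isotropic $v$ the radical $R_v$ of $\iota_v\Omega$ is three-dimensional and contains $v$, and the condition becomes $R_v\subset V_5$ for all $v\in V_2$ (with $V_5/R_v$ isotropic in the rank-four quotient). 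The key computation is that for $V_2=\langle v_1,v_2\rangle$ isotropic one has $\sum_{v\in V_2}R_v=R_{v_1}+R_{v_2}$ of dimension exactly $5$, so $V_5$ is uniquely determined: setting $v\times w:=q^{-1}\Omega(v,w,-)$ and using the octonionic identity $v_1\times(v_1\times v_2)=-q(v_1)\,v_2+b(v_1,v_2)\,v_1$ (with $b$ the polarisation of $q$), isotropy of $V_2$ makes the right-hand side vanish, so the common vector $v_1\times v_2$ lies in $R_{v_1}\cap R_{v_2}$ and $\dim(R_{v_1}+R_{v_2})\le5$; a direct check gives equality generically. Therefore the projection $\cB\to OG(2,7)$, $(V_2,V_5)\mapsto V_2$, is generically bijective, and $\cB$ is irreducible of dimension $7$, proving the first assertion.

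For the second assertion I would compute at the explicit general point $A_4=\langle e_\alpha,e_{-\alpha},e_\beta,e_{-\beta}\rangle$, which lies in the open orbit since $q|_{A_4}$ is nondegenerate. A $\beta$-plane through $p=[A_4]$ has $V_2\subset A_4$ isotropic for $q|_{A_4}$, so $V_2$ ranges over $OG(2,4)=\PP^1\sqcup\PP^1$, and the associated $V_5=\sum_{v\in V_2}R_v$ must moreover contain $A_4$. Running the radical computation on the two rulings, exactly one ruling $\mu\mapsto V_2(\mu)=\langle e_\alpha-\mu e_{-\beta},\,e_\beta+\mu e_{-\alpha}\rangle$ satisfies $A_4\subset V_5(\mu)$, with $V_5(\mu)=A_4\oplus\langle e_0-\mu e_\gamma-\mu^{-1}e_{-\gamma}\rangle$. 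Reading off $V_5(\mu)/A_4=[\,1:-\mu:-\mu^{-1}\,]\in\PP(V_7/A_4)$ displays these $\beta$-planes as the conic $\{yz=x^2\}$, in agreement with the description in Proposition \ref{lines2} of the lines through $p$ as a $\PP^1\times\PP^1$ mapped to $\PP(V_7/A_4)$ by $|\cO(2,0)|$. As a cross-check this recovers $\dim\cB=7$ through the point–plane incidence, whose fibres are the conic over $CG$ and the $\PP^2$ over $\cB$.

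For transversality I would argue by openness inside the irreducible family $\cB$: it suffices to exhibit a single $\beta$-plane $\Pi_0=\{V_2\subset W\subset V_5\}\cong G(2,V_5/V_2)=\PP^2$ whose intersections with the three orbits are transverse. By \cite[Proposition 3.1]{cg} the orbit of a point $W\in\Pi_0$ is detected through the degeneracy of $q|_W$, so the orbit strata pull back to explicit degeneracy loci on $\PP^2$; the lower-dimensional orbits, being of codimension at least two (the closed orbit, a copy of $\QQ^5$, of codimension three), meet $\Pi_0$ in the expected dimension, and the substantive point is transversality to the codimension-one orbit, for which I would check that its discriminant curve in $\PP^2$ is reduced and that $\Pi_0$ is not tangent to the divisor. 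I expect this final step — controlling the intersection with the deepest strata and verifying smoothness of the degeneracy curves — to be the main technical obstacle, the remainder being the linear-algebra computation of the radicals $R_v$.
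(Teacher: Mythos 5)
Parts one and two of your proposal are essentially sound, and partly by a different route than the paper's. Your reformulation of the containment condition as $V_2\subseteq\ker\bigl(\Omega|_{V_5}\bigr)$ is equivalent to the paper's pair of conditions $\Omega(V_2,V_2,V_5)=0$, $\Omega(V_2,V_5,V_5)=0$, and your computation of the conic through $A_4=\langle e_\alpha,e_{-\alpha},e_\beta,e_{-\beta}\rangle$ matches the paper's: there one writes $V_5=A_4+\langle xe_0+ye_\gamma+ze_{-\gamma}\rangle$ and finds that a rank-four system of linear forms drops to rank two exactly when $x^2=yz$; your points $[1:-\mu:-\mu^{-1}]$ lie on that conic and your $V_2(\mu)$ is exactly the kernel of that system. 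Your route to the dimension count via $OG(2,7)$, the radicals $R_v$ and the cross-product identity is genuinely different from the paper's (which deduces $\dim=7$ from the conic plus the point--plane incidence), but note it contains an unverified step: knowing $R_{v_1}+R_{v_2}\subseteq V_5$ and $\dim(R_{v_1}+R_{v_2})=5$ only pins down the \emph{candidate} $V_5$, hence gives $\dim\le 7$; to get equality you must still check that this candidate actually lies in $CG$, i.e. that $\Omega(v_1,R_{v_2},R_{v_2})=0=\Omega(v_2,R_{v_1},R_{v_1})$ generically --- the radical containment is necessary but not sufficient. This is rescued only because your ``cross-check'' (the incidence count, which is the paper's actual argument) is itself complete once the conic is established.

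The genuine gap is the third assertion. You explicitly defer the transversality verification, call it ``the main technical obstacle'', and only sketch a strategy (reducedness of a discriminant curve, non-tangency to a divisor) without carrying it out; so as written the proposal does not prove the proposition. Moreover this step is considerably easier than you anticipate, and the paper settles it with one explicit example: take $V_5=\langle e_\alpha,e_\beta,e_{-\alpha},e_{-\beta},e_\gamma\rangle$ and $V_2=\langle e_{-\alpha},e_{-\beta}\rangle$; the points $A$ of this $\beta$-plane are described by $A^\perp=\langle e_0,\,e_\gamma,\,ue_{-\alpha}+ve_{-\beta}+we_{-\gamma}\rangle$, and the rank of the invariant quadratic form on $A^\perp$ is $3$ for $w\neq 0$ and $1$ for $w=0$. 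Since by Proposition 3.1 of \cite{cg} the orbit of a point is governed precisely by this rank, the plane meets the open orbit in $\{w\neq 0\}$, meets the codimension-one orbit in the line $\{w=0\}$ (the expected dimension), and misses the closed orbit entirely; no analysis of smoothness of degeneracy curves is required. Without some computation of this kind, the third claim --- the one the paper actually needs later, to justify using general $\beta$-planes as transverse representatives of $\s'_6$ in the Gromov--Witten computations --- remains unproved in your write-up.
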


\begin{proof} 
A $\beta$-plane is defined by a pair $(V_2\subset V_5)$ of subspaces of $V_7$. 
It is contained in $CG$ if and only if $\Omega(V_2,V_2,V_5)=0$ and $\Omega(V_2,V_5,V_5)=0$. 

Let us describe the $\beta$-planes passing through a general point $p$ of $CG$, which
we choose to be the point defined by $A=\langle e_\alpha, e_\beta, e_ {-\alpha}, e_{-\beta}
\rangle .$ Suppose that $V_5$ is generated by $A$ and $a=xe_0+ye_{\gamma}+ze_{-\gamma}$. 
The conditions on $V_2\subset A$ now restrict to $\Omega (V_2,A,a)=0$, which means that 
$V_2$ must be contained in the kernel of the four linear forms $\Omega(a,e_\alpha,\bullet)=xv_{-\alpha}+yv_\beta$, $\Omega(a,e_\beta,\bullet)=xv_{-\beta}-yv_\alpha$, $\Omega(a,e_{-\alpha},\bullet)=-xv_{\alpha}+zv_{-\beta}$, $\Omega(a,e_{-\beta},\bullet)=-xv_{\beta}-zv_{-\alpha}$. 
This systems of linear forms has rank four in general, and rank two if $x^2=yz$. 
There is therefore a conic of $\beta$-planes through a general point of $CG$, 
from which one can deduce that there is a seven dimensional family of 
$\beta$-planes on $CG$. 

Let us choose for example $V_5=\langle e_\alpha, e_\beta, e_ {-\alpha}, e_{-\beta},
e_\gamma\rangle$. Then $V_2=\langle e_ {-\alpha}, e_{-\beta}\rangle$ and our plane 
$V_2\subset A\subset V_5$ can be described by $A^\perp =\langle e_0, e_\gamma , 
ue_{-\alpha}+ve_{-\beta}+we_{-\gamma}\rangle.$ The rank of the invariant quadratic 
form on such a three-space is $3$ for $w\ne 0$ and $1$ for $w=0$. In particular 
it is transverse to the orbit stratification. \end{proof}

\section{Some Gromov-Witten invariants}

In this section we compute explicitly the Gromov-Witten
invariants that we need. By Lemma \ref{enough}, these will only be degree one invariants. 
By Propositions \ref{lines} and \ref{lines2}, the variety of lines in the Cayley Grassmannian
is smooth and irreducible of the expected dimension, so we can apply Proposition \ref{enum} 
if we use classes of varieties that are transverse to the orbit stratification. 
We will use either restrictions of Schubert classes from the ambient Grassmannian $G(4,7)$, 
or when convenient, the classes $\s_8$, $\s_7$ and $\s'_6$ of points, lines and $\beta$-planes
in $CG$, which are in general transverse to the orbit stratification (this is obvious for 
lines, and for planes this is Proposition \ref{bplanes}).

\subsection{The quantum Chevalley formula}

The degree one Gromov-Witten invariants that appear in the quantum Chevalley formula
are of type $I_1(\s_1,\s_k, \s_\ell)$ for $k+\ell=11$. By the divisor axiom this 
reduces to the two-points Gromov-Witten invariant $I_1(\s_k, \s_\ell)$. 

\subsubsection{$I_1(\s_3, \s_8)$}  This invariant is equal to $I_1(\iota^*\t_{111}, \s_8)$,
and can thus be computed as the number of lines passing through a general point $A$, and containing 
a point $B$ representing a four-space that meets a generic $U_5\subset V_7$ in dimension at least three. The base of the line is a hyperplane $A_3$ of $A$, that meets $U_5$ in dimension at least two, since it is also a hyperplane in $B$. But $A_2=A\cap U_5$ has dimension two, so we need that 
$A_2\subset A_3\subset A$. The existence of $B$ is then 
guaranteed by the fact that the induced map $\wedge^2A_3\ra (U_5/A_2)^*$ does not have maximal
rank. Generically the rank can drop only by one and $B$ is then uniquely determined. Hence 
$$ I_1(\s'_3, \s_8) = c_1(\wedge^2A_3^*)=2,$$
where here  $A_3$ is considered as vector bundle on $\PP(A/A_2)=\PP^1$.

\subsubsection{$I_1(\s'_3, \s_8)$}  This invariant is equal to $I_1(\iota^*\t_3, \s_8)$
and therefore can be computed as  the number of lines passing through a general point $A$, and containing a point $B$ representing a four-space that contains a generic $U_1\subset V_7$. 
The base of the line is a hyperplane $A_3$ of $A$ such that $B=A_3\oplus U_1$ is in $CG$. 
The condition for this is that the induced map $\wedge^2A_3\ra U_1^*$ is zero. As a consequence 
$$ I_1(\s_3, \s_8) = c_3(\wedge^2A_3^*)=0,$$
where $A_3$ is considered as vector bundle on $\PP(A)=\PP^3$. The latter Chern class
is zero because it is the number of isotropic hyperplanes for a non-degenerate two-forms 
in four variables. 

\subsubsection{$I_1(\iota^*\t_{1111}, \s_7)$} Consider a general line $d$ in $CG$ defined by a pair $V_3\subset V_5$ representing $\s_7$ (the same notation will be used for the computations
of the next two invariants involving $\s_7$). 
This invariant 
is the number of lines in $CG$ meeting $d$, say at $A$, and passing through some $B$ which is contained in a 
general hyperplane $V_6$. The axis of the line is $A_3\subset V_5\cap V_6$. Moreover $A_3$ meets $V_3$ in codimension one, 
so necessarily along $V_3\cap V_6$. Finally, for $A_3$ to be contained  in some $B\subset V_6$ belonging to $CG$ we need that 
the map $\wedge^2A_3\ra (V_6/A_3)^*$ does not have maximal rank. Therefore 
$$ I_1(\iota^*\t_{1111}, \s_7) = c_1(\wedge^2A_3^*)-c_1((V_6/A_3)=1,$$
where here  $A_3$ is considered as vector bundle on $\PP(V_5\cap V_6/V_3\cap V_6)=\PP^1$.

\subsubsection{$I_1(\iota^*\t_{211}, \s_7)$} 
This invariant is  the number of lines in $CG$ meeting $d$ at some $A$, and passing through some $B$
such that $\dim(B\cap U_2)\ge 1$ and $\dim(B\cap U_5)\ge 3$, for $U_2\subset U_5$ generic. 
The axis of the line is $A_3\subset V_5$, and necessarily $\dim(A_3\cap U_5)\ge 2$ . We also have 
$\dim(A_3\cap V_3)\ge 2$, and therefore $\dim(A_3\cap U_5\cap V_3)\ge 1$, which means that $A_3$ contains
the line $U_1=U_5\cap V_3$. Our parameter space for $A_3$ is therefore $\PP(V_3/U_1)\times \PP(U_5\cap V_5/U_1)
\simeq \PP^1\times\PP^1$. The condition for the existence of $B$ is that the induced map $\wedge^2 A_3\ra U_2^*$
does not have full rank. By the Thom-Porteous formula we deduce that   
$$ I_1(\iota^*\t_{211}, \s_7) = c_2(\wedge^2A_3^*)=3.$$
Indeed, if $h$ and $h'$ are the hyperplane classes on the two copies of $\PP^1$, then 
$c(A_3^*)=(1+h)(1+h')$ and $c(\wedge^2A_3^*)=(1+h)(1+h')(1+h+h')$, hence
$c_2(\wedge^2A_3^*)=hh'+(h+h')^2=3hh'$.

\subsubsection{$I_1(\iota^*\t_{22}, \s_7)$} 
This is  the number of lines in $CG$ meeting $d$ at some $A$, and passing through some $B$
such that $\dim(B\cap U_3)\ge 2$  for $U_3\subset V_7$ generic.
 The axis of the line is $A_3\subset V_5$, and necessarily $\dim(A_3\cap U_3)\ge 1$, so  $A_3$ contains
the line $U_1=U_3\cap V_5$. Our parameter space for $A_3$ is therefore $\PP(V^*_3)\simeq \PP^2$. 
The condition for the existence of $B$ is that the induced map $\wedge^2 A_3\ra (U_3/U_1)^*$
does not have full rank. By the Thom-Porteous formula again we deduce that   
$$ I_1(\iota^*\t_{22}, \s_7) = c_2(\wedge^2A_3^*)=2.$$

\medskip
By the restriction  formulas \ref{res}, $\iota^*\t_{1111}=\s_4$, , $\iota^*\t_{211}=\s_4+2\s'_4$ and
 $\iota^*\t_{22}=\s_4+\s'_4+\s''_4$, so we deduce that 
$$ I_1(\s_4, \s_7)= I_1(\s'_4, \s_7)=1, \qquad  I_1(\s''_4, \s_7)=0.$$

\subsubsection{$I_1(\iota^*\t_{2111}, \s'_6)$} 
This is the number of lines $d=(U_3,U_5)$ in $CG$ meeting 
\begin{enumerate} 
\item a general $\beta$-plane $P(V_2,V_5)$ at a point $A$, such that $V_2\subset A\subset V_5$, 
\item a general Schubert cycle $\t_{2111}(W_2,W_6)$ at a point $B$, such that 
$B\cap W_2\ne 0$ and $B\subset W_6$. 
\end{enumerate}
Since $U_3$ must be a  hyperplane in both $A$ and $B$, we need that $U_3\subset V_4:=V_5\cap W_6$. 
Moreover $U_3\cap V_2$ must contain a certain one dimensional subspace $V_1$. But $U_3\cap V_2\subset W_6\cap V_2$, so this must 
be equal to $V_1$. Once $U_3$ is fixed, since $U_3\cap W_2\subset V_5\cap W_2=0$, there must exist
a unique line $L_1\subset W_2$ such that $B=U_3+L_1$. Since $A=U_3+V_2$, the line $d$ is then 
determined. The set of lines to be considered is thus isomorphic to 
$\PP (W_2)\times \PP(V_4/V_1)^*\simeq \PP^1\times\PP^2$. 

The condition for $d$ to be contained in $CG$ is that $\Omega(U_3,U_3,L_1)=0$, which can
be interpreted as the vanishing of a general section of the bundle $L_1^*\otimes \wedge^2U_3^*$. 
Let us denote by $h_1=c_1(L_1^*)$ and $h_2$ the hyperplane classes of our two projective spaces. 
The Chern roots of $U_3^*$ are $0,a,b$, with $a+b=h_2$ and $ab=h_2^2$. Then the Chern roots of 
$L_1^*\otimes \wedge^2U_3^*$ are $h_1+a,h_1+b,h_1+a+b$ and therefore the invariant we are looking for is 
$$c_3(L_1^*\otimes \wedge^2U_3^*)=(h_1+a)(h_1+b)(h_1+a+b)=2h_1h_2^2=2.$$

\subsubsection{$I_1(\iota^*\t_{221}, \s'_6)$} 
This is the number of lines $d=(U_3,U_5)$ in $CG$ meeting 
\begin{enumerate} 
\item a general $\beta$-plane $P(V_2,V_5)$ at a point $A$, such that $V_2\subset A\subset V_5$, 
\item a general Schubert cycle $\t_{221}(W_3,W_5)$ at a point $B$, such that 
$\dim (B\cap W_3)\ge 2$ and $\dim (B\subset W_5)\ge 3$. 
\end{enumerate}
Since $U_3$ must be a  hyperplane in both $A$ and $B$, we first deduce that 
$\dim (U_3\cap V_2)\ge 1$ and $\dim (U_3\cap W_5)\ge 2$. Since $V_2\cap W_5=0$, 
this implies that $U_3=L_1+P_2$ for a line $L_1\subset V_2$ and a plane $P_2\subset 
V_3:=V_5\cap W_5$. Moreover $P_2\cap W_3\subset W_1:=V_5\cap W_3$ and since 
$U_3\cap W_3$ must be non zero, we need the equality $P_2\cap W_3=W_1$. 
Once $U_3$ is fixed, we need that $B=U_3+B_2$ with $W_1\subset B_2\subset W_3$
(because of the condition $\dim (B\subset W_3)\ge 2$), and $A=U_3+V_2$, so the 
line $d$ is determined. 

The set of lines to be considered is thus isomorphic to 
$\PP (V_2)\times \PP(V_3/W_1)\times \PP(W_3/W_1)\simeq \PP^1\times\PP^1\times\PP^1$. 
Let us denote by $h_1, h_2, h_3$ the hyperplane classes of our three projective lines. 
The condition for $d$ to be contained in $CG$ is that $\Omega(U_3,U_3,B_2/W_1)=0$, which we
interprete again as the vanishing of a general section of the bundle $(B_2/W_1)^*
\otimes \wedge^2U_3^*$. The Chern roots of $U_3^*$ are $0, h_1,h_2$, hence 
the invariant we are looking for is 
$$c_3((B_2/W_1)^*\otimes \wedge^2U_3^*)=(h_1+h_3)(h_2+h_3)(h_1+h_2+h_3)=3h_1h_2h_3=3.$$

\smallskip
By \ref{res}, $\iota^*\t_{2111}=2\s_5$, and $\iota^*\t_{221}=3\s_5+
\s'_5$, so we deduce that 
$$ I_1(\s_5, \s'_6)=1  \qquad \mathrm{and}\qquad  I_1(\s'_5, \s'_6)=0.$$

\subsubsection{$I_1(\iota^*\tau_{2111}, \iota^*\tau_{2211})$}
This is the number of lines in $CG$ joining $A$ and $B$ such that
\begin{enumerate}
\item $A\subset U_6$ and $\dim(A\cap U_2)\ge 1$ for some generic $U_2\subset U_6$, 
\item  $B\subset V_6$ and $\dim(B\cap V_3)\ge 2$ for some generic $V_3\subset V_6$.
\end{enumerate}
The axis $D_3$ of such a line must be contained in $W_5=U_6\cap V_6$ and meet $V_3$ non trivially, along a line  $D_1\subset  W_2=U_6\cap V_3$. The parameter space $P$ for the pair $D_1\subset D_3$
is the quadric bundle $G(2,W_5/D_1)$ over $\PP(W_2)\simeq \PP^1$. We need that the three-form $\Omega$ vanish on $D_3$, a codimension one condition. If $D_3$ does not contain $W_2$ or $W_1=U_2 \cap V_6$, for $A$ and $B$ to exist we need that the induced maps 
$\wedge^2D_3\ra U_2^*$ and $\wedge^2D_3\ra (V_3/D_1)^*$ do not have maximal ranks, each of which 
is a codimension two condition. The number of points satisfying these conditions is given by:
$$
c_1(D_3^*)c_2(\wedge^2D_3^*)c_2(\wedge^2D_3^*-(V_3/D_1))=7.$$
Indeed, this intersection product has to be taken in $P$, whose cohomology ring is generated 
by the hyperplane class $h=c_1(D_1^*)$ pulled-back from $\PP^1$, and the Chern classes 
$a_1, a_2$ 
of the dual tautological vector bundle $(D_3/D_1)^*$. We leave the details of the computation to the reader. 

However, the important point to notice is that among the seven points $p_1,\cdots , p_7$ at
which our previous conditions are satisfied, some may not correspond to an actual line inside 
$CG$ passing through $A$ and $B$. Indeed, it could happen that $W_2\subset D_3$ or $W_1\subset D_3$. Let us study these two cases separately.

Consider the case where $W_2\subset D_3$. The parameter space for $D_3$ is then $\PP(W_5/W_2)$. Among $p_1,\cdots, p_7$, the points that satisfy $W_2\subset D_3$ are those for which $\omega|_{D_3}=0$ (a codimension one condition) and the induced map $\wedge^2D_3\ra U_2^*$ does not have maximal rank (a codimension two condition). Indeed, notice that $W_2\subset D_3$ implies automatically that $\wedge^2D_3\ra (V_3/D_1)^*$ does not have maximal rank. As $\dim(\PP(W_5/W_2))=2$, these conditions will not
be satisfied generically. 

Consider then the case $W_1 \subset D_3$. The parameter space for $D_1\subset D_3$ is now the projective bundle $\PP(W_5/(W_1+D_1))$ over $\PP(W_2)$. Among $p_1,\cdots, p_7$, the points that satisfy $W_1\subset D_3$ are those for which $\omega|_{D_3}=0$ (codimension one) and the induced 
map $\wedge^2D_3\ra (V_3/D_1)^*$ does not have maximal rank (codimension two). Their number is
$c_1(D_3^*)c_2(\wedge^2D_3^*-(V_3/D_1))=1.$

As a consequence, we obtain:
$$
I_1(\iota^*\tau_{2111}, \iota^*\tau_{2211})=2I_1(\s_5,\s_6)+6I_1(\s_5,\s'_6)=7-1=6.
$$

\subsubsection{$I_1(\iota^*\tau_{32}, \iota^*\tau_{2211})$}
This is the number of lines in $CG$ joinning $A$ and $B$ such that
\begin{enumerate}
\item $U_1\subset A$ and $\dim(A\cap U_3)\ge 2$ for some generic $U_1\subset U_3$, 
\item  $B\subset V_6$ and $\dim(B\cap V_3)\ge 2$ for some generic $V_3\subset V_6$.
\end{enumerate}
The axis $D_3$ of such a line must be contained in $V_6$; moreover it must satisfy $D_3\cap U_1\subset U_1\cap V_6=0$, $\dim(D_3\cap U_3)\ge 1$ and $\dim(D_3\cap V_3)\ge 1$. The last two conditions imply the existence of two lines $D_1$ and $D'_1$ inside $D_3$ which are contained respectively in $V_6\cap U_3=W_2$ and $V_3$. The parameter space $P$ for $D_1,D'_1\subset D_3$ is the projective bundle 
$\PP(V_6/(D_1+D'_1))$ over $\PP(W_2)\times \PP(V_3)$. We need that the three-form $\omega$ vanishes on $D_3$,
a codimension one condition. If $D_3\cap V_3=D'_1$, for $A$ and $B$ to exist we need that the induced maps 
$\wedge^2D_3\ra U_1^*$ and $\wedge^2D_3\ra (V_3/D'_1)^*$ do not have maximal ranks, the first being a codimension three and the second a codimension two condition. The number of points satisfying 
these conditions is given by:
$$
c_1(D_3^*)c_3(\wedge^2D_3^*)c_2(\wedge^2D_3^*-(V_3/D'_1))=4,$$
this intersection product being taken in $P$. One can easily verify that none of these four points satisfy $\dim(D_3\cap V_3)\ge 2$. As a consequence, we obtain:
$$
I_1(\iota^*\tau_{32}, \iota^*\tau_{2211})=I_1(\s_5,\s_6)+3I_1(\s_5,\s'_6)+I_1(\s'_5,\s_6)+3I_1(\s'_5,\s'_6)=4.
$$

This is enough to deduce:

\begin{proposition}
\label{prophyperplaneproducts}
Up to degree three the quantum product with the hyperplane class is equal to the classical product. 
Up to degree seven, it is given by:
$$\begin{array}{ccc}
\s_3\s_1 & = & 2\s_4+2\s'_4+2q, \\
\s'_3\s_1 & = & \s'_4+\s''_4, \\
\s_4\s_1 & = & 2\s_5+q\s_1, \\
\s'_4\s_1 & = & 2\s_5+\s'_5+q\s_1, \\
\s''_4\s_1 & = & \s'_5, \\
\s_5\s_1 & = & \s_6+2\s'_6+q\s'_2, \\
\s'_5\s_1 & = & 3\s_6+2\s'_6+q\s_2, \\
\s_6\s_1 & = & \s_7+q\s'_3, \\
\s'_6\s_1 & = & \s_7+q\s_3.
\end{array}$$  
\end{proposition}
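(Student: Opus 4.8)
The plan is to read the nine products off the general ansatz for $\s_k\s_1$ displayed before Lemma \ref{enough}, by computing each unknown coefficient as a degree one Gromov--Witten invariant. In $\s_k\s_1$ the $q$-correction collects the dual classes $\t^\vee$ of codimension $k+1-4=k-3$, with coefficient $I_1(\s_k,\s_1,\t)$, which by the divisor axiom equals the two-point invariant $I_1(\s_k,\t)$. For $k\le 2$ this codimension is negative, so no quantum term can appear and the product is classical; this gives the first assertion. For $3\le k\le 6$ the only coefficients that occur are the unknowns $a_3,a'_3,a_4,a'_4,a''_4,a_5,b_5,a'_5,b'_5$ of the table, since the degree two coefficient $a_7$ enters only in $\s_7\s_1$ and is not needed here.

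Next I would identify each unknown with a concrete two-point invariant via Poincar\'e duality in the self-dual Schubert basis. In degree three the only dual class is the point $\s_8$, so $a_3=I_1(\s_3,\s_8)$ and $a'_3=I_1(\s'_3,\s_8)$. In degree four it is $\s_7$, giving $a_4=I_1(\s_4,\s_7)$, $a'_4=I_1(\s'_4,\s_7)$, $a''_4=I_1(\s''_4,\s_7)$. In degree five the dual classes are the two degree six Schubert classes, and the plane geometry recorded above fixes the pairing: since $\s_2\s_6\ne 0$ while $\s_2\s'_6=0$, one has $\s_2^\vee=\s_6$ and symmetrically $(\s'_2)^\vee=\s'_6$. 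Hence $a_5=I_1(\s_5,\s_6)$, $b_5=I_1(\s_5,\s'_6)$, $a'_5=I_1(\s'_5,\s_6)$, $b'_5=I_1(\s'_5,\s'_6)$, the equality of the coefficients appearing in $\s_6\s_1$ and $\s'_6\s_1$ being already forced by the symmetries built into the ansatz.

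All the needed invariants are then provided by the computations of this section, after rewriting the restricted Schubert classes of Proposition \ref{res} in the $CG$-basis and solving the resulting small linear systems: $I_1(\s_3,\s_8)=2$ and $I_1(\s'_3,\s_8)=0$; $I_1(\s_4,\s_7)=I_1(\s'_4,\s_7)=1$ and $I_1(\s''_4,\s_7)=0$; and $I_1(\s_5,\s_6)=0$, $I_1(\s_5,\s'_6)=1$, $I_1(\s'_5,\s_6)=1$, $I_1(\s'_5,\s'_6)=0$. Substituting these values into the ansatz returns exactly the nine products in the statement. Since each coefficient is a count of honest lines in $CG$ meeting transverse cycles, it is automatically non negative, the property the rest of the paper relies on.

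The delicate point is the four degree five invariants, especially those paired with the $\alpha$-plane class $\s_6$. By Proposition \ref{aplanes} there is no $\alpha$-plane through a general point, so $\s_6$ is not transverse to the orbit stratification and cannot be inserted directly into the enumerative count of Proposition \ref{enum}; instead one computes against the transverse restrictions $\iota^*\t_{2211}=\s_6+3\s'_6$ coming from the homogeneous ambient $G(4,7)$, and isolates $I_1(\,\cdot\,,\s_6)$ by subtracting the invariants against $\s'_6$, which are accessible through $\beta$-planes. The genuinely subtle step is that the raw Thom--Porteous count for $I_1(\iota^*\t_{2111},\iota^*\t_{2211})$ overcounts: among the candidate axes $D_3$ satisfying the rank-drop conditions, those with $W_2\subset D_3$ or $W_1\subset D_3$ do not correspond to actual lines of $CG$ through both cycles, and one must check that they contribute exactly one excess point, so that $7-1=6$. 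Verifying this, together with the analogous vanishing excess check for $I_1(\iota^*\t_{32},\iota^*\t_{2211})$, is the real content; the remaining Chern-class computations on products of projective spaces and quadric bundles are routine.
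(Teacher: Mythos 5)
Your proposal is correct and follows essentially the same route as the paper: plug the degree-one two-point invariants into the Chevalley ansatz, identify each unknown coefficient via the divisor axiom and Poincar\'e duality (with $\s_2^\vee=\s_6$, $(\s'_2)^\vee=\s'_6$ fixed by $\s_2\s_6\ne 0$, $\s_2\s'_6=0$), compute the invariants against restricted Schubert classes and $\beta$-planes, and isolate the $\s_6$-pairings by subtraction from $I_1(\iota^*\t_{2111},\iota^*\t_{2211})=6$ and $I_1(\iota^*\t_{32},\iota^*\t_{2211})=4$ with their excess-point corrections. Note only that your assignments $I_1(\s_3,\s_8)=2$, $I_1(\s'_3,\s_8)=0$ are the correct ones (consistent with $\iota^*\t_{111}=\s_3$, $\iota^*\t_3=\s'_3$ and the final proposition), even though the paper's displayed formulas in those two subsubsections carry swapped labels by typo.
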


\subsection{The missing invariants}

Recall that in order to determine completely the quantum multiplication, we just need
to determine the products $\sigma_2^2$ and $\sigma_4\sigma_2$. This requires the computation 
of three more Gromov-Witten invariants.

\subsubsection{$I_1(\s_2,\s_2,\s_8)$}
This invariant is $I_1(\iota^*\t_2,\iota^*\t_2,\s_8)$. It can therefore be computed as the 
number of lines $d=(D_3,D_5)$ in $CG$ joining three points $A,B,C$ such that $A$ (resp. $B$) 
meets non trivially a general $A_2$ (resp. $B_2$), and $C$ is a general point in $CG$. 
Since $C\cap B_2=0$, we must have $D_5\subset C+B_2$, hence $A\cap A_2\subset (C+B_2)\cap A_2$,
which is a line $A_1$, and there must be equality. Symmetrically $B\cap B_2=B_1:=(C+A_2)\cap B_2$. 
The line $d$ is therefore determined by $U_3\subset C$, which gives $A=D_3+A_1$ and $B=D_3+B_1$. 

The condition that $d$ is contained in $CG$ reduces  to $\Omega(D_3,D_3,A_1)=0$.
So the invariant we are looking for is computed on  $\PP(C^*)$ as 
$$I_1(\s_2,\s_2,\s_8)=c_3(\wedge^2D_3^*)=0.$$

\subsubsection{$I_1(\s_2,\s_4,\s'_6)$}
 We compute this Gromov-Witten invariant as $I_1(\iota^*\t_2,\iota^*\t_{1111},\s'_6)$, so as the 
number of lines in $CG$ joining three points $A,B,C$ such that
\begin{enumerate}
\item $A$ meets non trivially a general $U_2$,
\item $B$ is contained in a general hyperplace $H_6$,
\item $C$ belongs to a general $\beta$-plane defined by a pair $V_2\subset V_5$.
\end{enumerate}
The axis $D_3$ of the line is contained in $A,B,C$, hence in $V_5\cap H_6$. Moreover it must meet $V_2$ in 
dimension at least one, so necessarily along $V_2\cap H_6$. Thus $V_2\cap H_6\subset D_3\subset V_5\cap H_6$ 
and $D_3$ is parametrized by a $\PP^2$. Then to get $A$ we need the induced map $\wedge^2D_3\ra U_2^*$
not to be of maximal rank. Necessarily $C=D_3+V_2$. Since $D_3\subset H_6$ there is a unique $B$ on the line 
joining $A$ to $C$ which is contained in $H_6$. It is automatically in $CG$ since $A$ and $C$ are. We conclude that 
$$I_1(\s_2,\s_4,\s'_6)=c_2(\wedge^2D_3^*)=2.$$

\subsubsection{$I_1(\s_2,\s_4,\s_6)$}
Let us compute $I_1(\iota^*\tau_{1111}, \iota^*\tau_2, \iota^*\tau_{33})=
I_1(\s_4,\s_2,\s_6+\s'_6)$. This is the number of lines $d=(D_3,D_5)$ in $CG$ joining 
three points $A,B,C$ such that
\begin{enumerate}
\item $A$ is contained in a general $A_6$,
\item $B$ meets non trivially a general $B_2$,
\item $C$ contains a general $C_2$.
\end{enumerate}
The axis $D_3$ must then be contained in $A_6$, and meet $C_2$ along a line, necessarily
$C_1:=C_2\cap A_6$. Then $C$ must be $D_3+C_2$ and $B$ must be generated by $D_3$ and a line
$B_1$ of $B_2$. (Beware that potentially the intersection $B_1=B\cap B_2$ could be contained
in $D_3$, which would impose $B_1=A_6\cap B_2$. But then the isotropy conditions would
include $\Omega(B_1,C_2,C_2)=0$, which is not possible.) We then get $D_5=D_3+C_2+B_1$ and 
$A=D_5\cap A_6$. We are thus led to consider a set of lines parametrized by $\PP(B_2)\times 
G(2,A_6/C_1)=\PP^1\times G(2,5)$. 

The isotropy conditions are $\Omega(D_3,D_3,C_1)=0$, $\Omega(D_3,D_3,C_2/C_1)=0$ and $ \Omega(D_3,D_3,B_1)=0$. The bundle $D_3/C_1$ is just the tautological bundle on the Grassmannian, 
let us denote the Chern roots of its dual by $a,b$ with $a+b=\tau_1$, $ab=\tau_{11}$ the usual Schubert classes. Let $h$ be the hyperplane class on $\PP(B_2)$. Our invariant is equal to
$c_1(\wedge^2(D_3/C_1)^*)c_3(\wedge^2D_3^*)c_3(B_1^*\otimes\wedge^2D_3^*)$, that is, 
$$\tau_1ab(a+b)(a+h)(b+h)(a+b+h)=\tau_1^2\tau_{11}(\tau_{11}+\tau_1^2)h=3.$$
This gives three points that satisfy the conditions $\Omega(D_3,D_3,C_1)=0$, \\ $\Omega(D_3,D_3,C_2/C_1)=0$ and $ \Omega(D_3,D_3,B_1)=0$ over $\PP(B_2)\times 
G(2,A_6/C_1)$. Among them, we still need to remove those for which $B=C$. As we already know that $C_1\subset D_3$, we have $B=C$ if $\dim(D_3\cap W_3)\geq 2$, where $W_3=A_6\cap(B_2+C_2)$. Let $C_1 \subset D_2\subset (D_3\cap W_3)$ with $\dim(D_2)=2$. Then the parameter space of $D_2\subset D_3$ is the projective bundle $\PP(A_6/D_2)$ over $\PP(W_3/C_1)$, and the conditions we need to impose are $\Omega(D_3,D_3,C_1)=0$ and $\Omega(D_3,D_3,C_2/C_1)=0$. Let $l$ be the hyperplane in $\PP(W_3/C_1)\cong \PP^1$, and $m$ the relative hyperplane class in $\PP(A_6/D_2)$. The number of points that we need to remove is therefore
\[
c_1(\wedge^3 D_3^*)c_3(\wedge^2 D_3^*)=(l+m)lm(l+m)=lm^3=1,
\]
and we get
\[
I_1(\s_4,\s_2,\s_6)=3-1-I_1(\s_2,\s_4,\s'_6)=0.
\]

This finally yields the two missing products:
$$\s_2^2=\s_4+2\s'_4+2\s''_4, \qquad \s_4\s_2 = \s_6+\s'_6+2q\s'_2.$$

\subsection{A presentation of the quantum cohomology ring}  

We now have enough information to deduce a presentation of the quantum cohomology ring. 
We first use the relations we have obtained so far to express the Schubert classes in terms of the 
generators $\sigma_1$ and $\sigma_2$. We could also have chosen the other degree two class $\s'_2=\s_1^2-\s_2$
but the formulas would be slightly worse. In degree three there is no quantum corrections, we easily get 
$$\s_3=\frac{1}{4}(3\s_1^3-5\s_1\s_2), \qquad \s'_3=\frac{1}{4}(3\s_1\s_2-\s_1^3).$$
In degree four knowing  $\s_3\s_1$, $\s'_3\s_1$, and  $\s_2^2$ that we have just computed, 
we deduce:
 \begin{eqnarray*}
\sigma_4 & = & \s_2^2-\frac{3}{2}\s_2\s_1^2+\frac{1}{2}\s_1^4, \\
\sigma'_4 & = & -\s_2^2+\frac{7}{8}\s_2\s_1^2-\frac{1}{8}\s_1^4-q, \\
\sigma''_4 & = & \s_2^2-\frac{1}{8}\s_2\s_1^2-\frac{1}{8}\s_1^4+q.
 \end{eqnarray*}
The quantum products of these classes by the hyperplane class will give not only an expression of $\s_5$ and $\s'_5$ 
in terms of the generators, but also a degree five relation in the quantum cohomology ring. We get 
 \begin{eqnarray*}
\sigma_5 & = & \frac{1}{2}\s_2\s_1^3-\s_2^2\s_1-\frac{3}{2}q\s_1, \\
\sigma'_5 & = & -\frac{3}{4}\s_2\s_1^3+\frac{7}{4}\s_2^2\s_1+\frac{3}{2}q\s_1,
 \end{eqnarray*}
plus the degree five relation $(\s'_4-\s_4-\s''_4)\s_1=0$. In degree six, we directly get $\s_6$ and $\s'_6$ from 
$\s_5\s_1$ and $\s'_5\s_1$:
 \begin{eqnarray*}
\sigma_6 & = & -\frac{5}{8}\s_2\s_1^4+\frac{11}{8}\s_2^2\s_1^2+q(2\s_1^2-\s_2), \\
\sigma'_6 & = & \frac{9}{16}\s_2\s_1^4-\frac{19}{16}\s_2^2\s_1^2+q(\s_2-\frac{9}{4}\s_1^2).
 \end{eqnarray*}
Our computation of $\s_4\s_2$ yields a degree $6$ relation and we get:

\begin{proposition}
The rational quantum cohomology ring of the Cayley Grassmannian 
is $QH^*(CG,\QQ)=\QQ[\sigma_1,\sigma_2,q]/\langle R_5(q), R_6(q)\rangle ,$
for the quantum relations 
 \begin{eqnarray*}
R_5(q) & = & \sigma_1^5-5\sigma_1^3\sigma_2+6\sigma_1\sigma_2^2+4q\sigma_1, \\
R_6(q) & = & 16\sigma_2^3-27\sigma_1^2\sigma_2^2+9\sigma_1^4\sigma_2+32q\s_2-28q\s_1^2.
 \end{eqnarray*}
\end{proposition}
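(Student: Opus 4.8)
The plan is to combine the structural input of \cite{ts} with the Gromov-Witten data already assembled. By \cite{ts} we know a priori that the ring is presented as $\QQ[\s_1,\s_2,q]/\langle R_5(q),R_6(q)\rangle$ with $R_5(q)=R_5+qQ_1$ and $R_6(q)=R_6+qQ_2$; since $q$ has degree four while $R_5,R_6$ have cohomological degrees five and six, the corrections are forced to be $Q_1=c\,\s_1$ and $Q_2=a\,\s_2+b\,\s_1^2$ for three scalars $a,b,c$ to be pinned down. By Lemma \ref{enough}, everything needed to fix these scalars is encoded in the quantum Chevalley formula of Proposition \ref{prophyperplaneproducts} together with the two products $\s_2^2$ and $\s_4\s_2$, all of which are now available.

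First I would run the quantum Giambelli recursion. Starting from $\s_3,\s'_3$, which receive no quantum correction, I would invert the hyperplane products degree by degree. In degree four the three classes $\s_4,\s'_4,\s''_4$ are recovered from the two products $\s_3\s_1,\s'_3\s_1$ together with $\s_2^2=\s_4+2\s'_4+2\s''_4$, a linear system whose $3\times 3$ matrix is invertible; substituting the lower-degree expressions gives $\s_4,\s'_4,\s''_4$ as explicit polynomials in $\s_1,\s_2,q$. Iterating with $\s_4\s_1,\s'_4\s_1,\s''_4\s_1$ and then $\s_5\s_1,\s'_5\s_1$ produces the Giambelli polynomials for $\s_5,\s'_5$ and then $\s_6,\s'_6$.

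The two relations then emerge as the consistency conditions of this overdetermined recursion, at exactly the degrees where the number of hyperplane products exceeds the number of target classes. In degree five the three entries $\s_4\s_1,\s'_4\s_1,\s''_4\s_1$ satisfy the syzygy $(\s'_4-\s_4-\s''_4)\s_1=0$, in which $\s_5,\s'_5$ and the quantum corrections cancel; substituting the degree-four Giambelli polynomials and clearing denominators yields precisely $R_5(q)=\s_1^5-5\s_1^3\s_2+6\s_1\s_2^2+4q\s_1$. In degree six I would compute $\s_4\s_2$ in two ways, namely as the polynomial product of the Giambelli expression for $\s_4$ with $\s_2$, and as the Gromov-Witten answer $\s_4\s_2=\s_6+\s'_6+2q\s'_2$ with $\s'_2=\s_1^2-\s_2$; equating the two and clearing denominators delivers $R_6(q)=16\s_2^3-27\s_1^2\s_2^2+9\s_1^4\s_2+32q\s_2-28q\s_1^2$.

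The part needing genuine care is not the derivation of these two relations but the claim that they generate the whole ideal, with no missing or spurious elements. This is exactly the content of the presentation theorem of \cite{ts}: the classical ring $\QQ[\s_1,\s_2]/\langle R_5,R_6\rangle$ deforms flatly over $\QQ[q]$, so $\langle R_5(q),R_6(q)\rangle$ cuts out a free $\QQ[q]$-module of the same rank fifteen, matching the sum of the Betti numbers $1,1,2,2,3,2,2,1,1$. I would confirm this by checking that setting $q=0$ recovers the classical presentation and that the graded pieces of the quotient have the predicted dimensions. As a final loop-closing check, of the kind anticipated before Lemma \ref{enough}, I would verify that the quantum Giambelli formulas in degrees seven and eight, reduced modulo $R_5(q)$ and $R_6(q)$, reproduce the remaining Chevalley entries $\s_7\s_1$ and $\s_8\s_1$ and force $a_7=0$, so that the presentation is fully consistent with the multiplication table.
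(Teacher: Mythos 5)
Your proposal is correct and follows essentially the same route as the paper: invoke the Siebert--Tian presentation theorem to fix the shape $R_5+qQ_1$, $R_6+qQ_2$, run the quantum Giambelli recursion through degree four using $\s_3\s_1$, $\s'_3\s_1$ and $\s_2^2$ (your $3\times 3$ system is indeed invertible), extract $R_5(q)$ from the syzygy $(\s'_4-\s_4-\s''_4)\s_1=0$, and extract $R_6(q)$ by comparing the polynomial expansion of $\s_4\s_2$ with the Gromov--Witten value $\s_6+\s'_6+2q\s'_2$. Your additional checks (flatness over $\QQ[q]$, the $q=0$ specialization, and the degree seven/eight loop closure forcing $a_7=0$) are consistent with, and in the last case identical to, what the paper does in the subsequent subsection.
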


A routine computation allows to check that for $q\ne 0$, these two equations define 
a reduced scheme. 

\begin{corollary}
The quantum cohomology ring $QH^*(CG,\QQ)|_{q=1}$ is semisimple. 
\end{corollary}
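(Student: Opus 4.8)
The plan is to reduce the semisimplicity of $QH^*(CG,\QQ)|_{q=1}$ to the reducedness of the corresponding scheme, and then to invoke the reducedness already recorded in the remark preceding the statement. I would first note that $QH^*(CG,\QQ)$ is a free $\QQ[q]$-module of rank $15$, the total even Betti number of $CG$: indeed the graded Hilbert series of $\QQ[\sigma_1,\sigma_2]/\langle R_5,R_6\rangle$, with $\sigma_1,\sigma_2$ of weights $1,2$ and $R_5,R_6$ a regular sequence of weights $5,6$, is $(1-t^5)(1-t^6)/\big((1-t)(1-t^2)\big)=1+t+2t^2+2t^3+3t^4+2t^5+2t^6+t^7+t^8$, whose coefficients are exactly the Betti numbers $1,1,2,2,3,2,2,1,1$. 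Consequently $A:=QH^*(CG,\QQ)|_{q=1}$ is a commutative Artinian $\QQ$-algebra of dimension $15$. For such an algebra over a field of characteristic zero, semisimplicity is equivalent to reducedness: a reduced Artinian algebra is a finite product of fields, each of them a finite (hence separable) extension of $\QQ$, so $A$ is an étale $\QQ$-algebra and in particular semisimple; equivalently $A\otimes_\QQ\overline{\QQ}\cong\overline{\QQ}^{15}$.

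It therefore remains to check that $A$ is reduced, which is the ``routine computation'' mentioned above, specialised to $q=1$. To make it explicit I would set
\[
f=\sigma_1^5-5\sigma_1^3\sigma_2+6\sigma_1\sigma_2^2+4\sigma_1, \qquad g=16\sigma_2^3-27\sigma_1^2\sigma_2^2+9\sigma_1^4\sigma_2+32\sigma_2-28\sigma_1^2,
\]
so that $\mathrm{Spec}(A\otimes\overline{\QQ})$ is the common zero locus $V(f,g)\subset\AA^2$. By the flatness of $QH^*(CG,\QQ)$ over $\QQ[q]$ this scheme is zero-dimensional of length $15$, so it is reduced precisely when its $15$ geometric points are all simple. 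The cleanest way to see this is transversality: at a common zero of $f$ and $g$ the scheme has multiplicity one as soon as the Jacobian determinant $J=\det\left(\partial(f,g)/\partial(\sigma_1,\sigma_2)\right)$ is nonzero there, so it suffices to verify that $f$, $g$ and $J$ have no common zero, i.e. that $\langle f,g,J\rangle$ is the unit ideal. In practice I would eliminate $\sigma_2$ by forming the resultant $\mathrm{Res}_{\sigma_2}(f,g)$, a univariate polynomial in $\sigma_1$, check that it is squarefree (nonvanishing discriminant) and that it shares no root with $\mathrm{Res}_{\sigma_2}(f,J)$; exhibiting the resulting $15$ distinct simple solutions then shows that $A\otimes\overline{\QQ}$ is reduced.

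The conceptual skeleton, namely that an Artinian algebra in characteristic zero is semisimple if and only if it is reduced, is completely standard, so the only genuine work is the finite elimination above. The single point deserving care is the flatness claim used at the start: it guarantees that the fibre at $q=1$ still has length $15$, so that once all $15$ geometric points are proved simple there can be no hidden nilpotents or missing components. This flatness is the freeness of $QH^*(CG,\QQ)$ as a $\QQ[q]$-module, for which the Schubert classes provide a basis, and it ensures that $R_5(q),R_6(q)$ remain a regular sequence after setting $q=1$. With that in hand, the elimination computation, which is entirely mechanical and can be carried out by hand or with a computer algebra system, completes the proof.
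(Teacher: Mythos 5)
Your proposal is correct and takes essentially the same route as the paper: the paper's entire proof is the remark that ``a routine computation allows to check that for $q\ne 0$, these two equations define a reduced scheme,'' with the standard equivalence between reducedness and semisimplicity for Artinian $\QQ$-algebras in characteristic zero left implicit. Your additions (the rank-$15$ count via freeness of $QH^*(CG,\QQ)$ over $\QQ[q]$, and the Jacobian/resultant scheme for carrying out the verification) simply make explicit what the paper calls routine.
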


\subsection{Completing the Chevalley and Giambelli formulas}

We have enough information to complete the quantum Chevalley formula up to degree seven. 
The quantum product $\s_7\s_1$ yields $\s_8$ up to a potential term in $q^2$. Plugging 
this into the product $\s_8\s_1$, we conclude that this term is in fact zero. This 
allows to complete the quantum Giambelli formula by the two equations
  \begin{eqnarray*}
\sigma_7 & = & \frac{1}{18}\s_2^3\s_1+q(\frac{13}{36}\s_1\s_2-\frac{17}{36}\s_1^3), \\
\sigma_8 & = & \frac{1}{9}\s_2^4+q(\frac{2}{9}\s_2^2+\frac{29}{36}\s_1^2\s_2-\frac{27}{36}\s_1^4)+q^2.
\end{eqnarray*}
Finally, the missing products in the quantum Chevalley formula are
$$\s_7\s_1 = \s_8+q\s_4+q\s'_4, \qquad \s_8\s_1=2q\s_5.$$
From that, it is straightforward to deduce the full multiplication table, that we 
report in the Appendix. Remarkably, all the coefficients are non 
negative. 

\smallskip
The mere fact that the quantum product by the hyperplane class has only non negative 
coefficients allows to ensure, following the approach of \cite{cl}, that conjecture
$\cO$ from \cite{ggi} is verified in this case:

\begin{corollary}
\label{corconjO}
The Cayley Grassmannian satisfies conjecture $\cO$.  
\end{corollary}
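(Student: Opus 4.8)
The plan is to verify the numerical conditions of conjecture $\cO$ directly from the quantum Chevalley formula, following the strategy of \cite{cl}. Recall that conjecture $\cO$ concerns the operator of quantum multiplication by the first Chern class $c_1(CG)$, which for a Fano variety of index four is $4\s_1$; equivalently, we may work with the multiplication operator $\hat{\s}_1$ by the hyperplane class $\s_1$ on $QH^*(CG,\QQ)|_{q=1}$ and rescale. Conjecture $\cO$ asserts two things: first, that the spectral radius $\delta_0$ of this operator is attained by a real positive eigenvalue, and second, that any eigenvalue of maximal modulus $|\lambda|=\delta_0$ is of the form $\delta_0\zeta$ with $\zeta$ an $r$-th root of unity, where $r$ is the Fano index (here $r=4$), each occurring with multiplicity one.

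First I would write down explicitly the matrix $M$ of $\hat{\s}_1$ acting on the basis of Schubert classes, reading off its entries from the full quantum Chevalley formula completed in Proposition \ref{prophyperplaneproducts} together with the two final products $\s_7\s_1=\s_8+q\s_4+q\s'_4$ and $\s_8\s_1=2q\s_5$, all evaluated at $q=1$. The crucial structural observation is that $M$ has \emph{non-negative} entries: this is exactly the ``remarkable'' positivity of all coefficients noted just above the corollary statement. This places us squarely in the setting of the Perron--Frobenius theorem, which is the engine driving \cite{cl}. The second step is to check that $M$, viewed as the adjacency-type matrix of the Bruhat graph with quantum corrections, is \emph{irreducible} (i.e. the associated directed graph is strongly connected): since $\s_1$ multiplication raises degree by one classically and the quantum terms $q\s_4, q\s_5, \ldots$ cycle back down, one passes from any Schubert class to any other by a sequence of multiplications, so the graph is strongly connected. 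Irreducibility plus non-negativity gives, by Perron--Frobenius, a simple real positive eigenvalue $\delta_0$ equal to the spectral radius, establishing the first half of conjecture $\cO$.

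For the second half, I would invoke the grading structure: the quantum parameter $q$ has degree four, so $M$ is compatible with the $\ZZ/4\ZZ$-grading on cohomology obtained by reducing the cohomological degree modulo $2r=8$ (equivalently, the even-degree grading modulo four). This periodicity means the characteristic polynomial of $M$ depends only on $\lambda^4$, forcing the spectrum to be stable under multiplication by fourth roots of unity; this is the standard mechanism by which the index $r$ enters. The peripheral eigenvalues $\delta_0\zeta$ with $\zeta^4=1$ then each inherit multiplicity one from the simplicity of $\delta_0$, via the cyclic symmetry of the period-four structure of an irreducible non-negative matrix (the imprimitivity part of Perron--Frobenius). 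Concretely, it suffices to compute the characteristic polynomial of $M$ and confirm that it is, up to a monomial factor, a polynomial in $\lambda^4$, and that $\delta_0$ is a simple root of the relevant factor.

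The main obstacle I anticipate is not the positivity, which is already established, but rather pinning down irreducibility rigorously: one must confirm that the quantum corrections genuinely connect the ``top'' classes $\s_7,\s_8$ back down to the bottom, so that no proper invariant coordinate subspace exists. This is where the specific nonzero quantum terms matter — for instance $\s_8\s_1=2q\s_5$ provides the essential return edge from the top class, and $\s_3\s_1=2\s_4+2\s'_4+2q$ injects a $q$-term early. Once strong connectivity is verified, the remainder is a routine appeal to Perron--Frobenius exactly as in \cite{cl}; indeed, the authors signal this by saying the positivity of the hyperplane product ``allows to ensure, following the approach of \cite{cl}'' that conjecture $\cO$ holds, so the proof should amount to little more than citing that the hypotheses of the \cite{cl} criterion are met.
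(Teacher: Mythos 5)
Your overall architecture coincides with the paper's: the proof of Corollary \ref{corconjO} in the paper is exactly the observation that all quantum Chevalley coefficients are non negative, combined with the Perron--Frobenius strategy of \cite{cl}, and your first half (non negativity, strong connectivity of the quantum Bruhat graph, hence a simple positive eigenvalue $\delta_0$ equal to the spectral radius) is a correct spelling-out of that strategy; your irreducibility check is fine, since the return edges coming from $\s_3\s_1=2\s_4+2\s'_4+2q$, $\s_4\s_1\ni q\s_1$, \dots, $\s_8\s_1=2q\s_5$ do make the graph strongly connected.

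However, your treatment of the peripheral spectrum has a genuine gap: the grading argument proves the wrong divisibility. Since multiplication by $\s_1$ at $q=1$ shifts the half-degree by $1$ modulo $4$, every closed walk in the quantum Bruhat graph has length divisible by $4$. By the imprimitivity part of Perron--Frobenius, the eigenvalues of maximal modulus are exactly $\delta_0$ times the $h$-th roots of unity, each simple, where $h$ is the gcd of all cycle lengths; the grading therefore only gives $4\mid h$. Conjecture $\cO$ requires the opposite containment, $h\mid 4$: if $h$ were $8$ or $12$, the spectrum would still be stable under multiplication by $i$, yet it would contain a peripheral eigenvalue such as $\delta_0 e^{i\pi/4}$, and the conjecture would fail. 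So ``stability of the spectrum under fourth roots of unity'' is necessary but not sufficient, and the same defect affects your concrete fallback: knowing that the characteristic polynomial is $t^{3}f(t^{4})$ with $\delta_0^4$ a simple root of $f$ does not preclude $f$ having another root of the same modulus. The missing step is cheap but must be said: exhibit a cycle of length exactly $4$, e.g. $\s_0\to\s_1\to\s_2\to\s_3\to\s_0$, which exists precisely because the coefficient of $q$ (that is, of $\s_0$) in $\s_3\s_1$ is non zero; then $h\mid 4$, hence $h=4$, and Perron--Frobenius yields that the peripheral eigenvalues are $\delta_0\zeta$ with $\zeta^4=1$, each simple. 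Alternatively one can argue as the paper does immediately after the corollary: compute $f(y)=-y^{3}+102y^{2}q-317yq^{2}+2048q^{3}$ explicitly and check that its three roots are distinct and that its unique root of maximal modulus is real and positive.
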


The eigenvalues of the quantum product by the hyperplane class are the roots of the polynomial
\[
p(t)=-t^{15}+102t^{11}q-317t^{7}q^2+2048t^{3}q^3=t^3f(t^4),
\]
where $f(y)=-y^{3}+102y^{2}q-317yq^2+2048q^3$. The equation $f(y)=0$ has three distinct solutions, among which the one with maximal modulus is real, namely $y_{max}\simeq 99.00713881372502$.  
As far as the anticanonical class $-K_{CG}=4\sigma_1$ is concerned, we deduce that its spectral
radius is $$T(CG)=4(y_{max})^{1/4}\simeq 12.6175960332>\dim CG+1,$$
in agreement with a conjecture
of Galkin \cite{ga}. 

\section{On the derived category of $CG$}

Denote by $T$ and $Q$ the ranks four
and three tautological bundles of the Grassmannian $G$, and their restrictions to $CG$ as well.

\subsection{An exceptional collection}

According to Dubrovin's conjecture, the semisimplicity of the cohomology ring of $CG$ 
should imply that the bounded derived category of sheaves $D^b(CG)$ admits a full
exceptional collection. The length of this collection should be equal to the rang of 
the Grothendieck group of vector bundles on $CG$, which is equal to $15$. We have 
been able to find an exceptional collection of this length, as follows. 

Consider the collections:
$$\begin{array}{rcl}
 \cC_0 & = & \langle \mathfrak{sl}(Q),\cO_G, Q, \wedge^2 T^\vee, \wedge^2 Q\rangle ,\\ 
 \cC_1 & = & \langle\cO_G, Q, \wedge^2 T^\vee, \wedge^2 Q\rangle ,\\
 \cC_2& = & \langle\cO_G, Q, \wedge^2 Q\rangle ,\\
 \cC_3& = &\cC_2.  
\end{array}$$

\begin{proposition} 
$\cC=\langle \cC_0, \cC_1(1), \cC_2(2), \cC_3(3)\rangle $
is a Lefschetz exceptional collection inside the bounded category of sheaves on $CG$.
\end{proposition}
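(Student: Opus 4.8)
The plan is to verify that $\cC=\langle \cC_0, \cC_1(1), \cC_2(2), \cC_3(3)\rangle$ is an exceptional collection of the Lefschetz type, which amounts to two essentially separate tasks. First I must check that the blocks $\cC_0,\ldots,\cC_3$ each consist of exceptional objects and that the whole ordered list is a full (or at least genuine) exceptional collection, i.e.\ that the relevant $\mathrm{Ext}$-groups between objects vanish in the prescribed semiorthogonal direction. Second, I must confirm that the collection has the \emph{Lefschetz} structure with respect to $\cO(1)$: the blocks are nested, $\cC_0\supseteq\cC_1\supseteq\cC_2\supseteq\cC_3$ (here indeed $\cC_3=\cC_2\subseteq\cC_1\subseteq\cC_0$ once we drop $\mathfrak{sl}(Q)$, then $\wedge^2T^\vee$), and each is twisted by a successive power of the hyperplane bundle.

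The computational heart is the vanishing of the groups $\mathrm{Ext}^\bullet(F,G(j))$ for the pairs $F,G$ among the generating bundles $\{\mathfrak{sl}(Q),\cO,Q,\wedge^2T^\vee,\wedge^2Q\}$ and the twists $j=0,1,2,3$ dictated by the block in which each object sits. My strategy is to reduce everything to the ambient Grassmannian $G=G(4,7)$. Since $CG$ is the zero locus of a general section of $\wedge^3T^\vee$, the Koszul resolution
\[
0\to \wedge^4(\wedge^3 T)\to \cdots \to \wedge^2(\wedge^3 T)\to \wedge^3 T\to \cO_G\to \cO_{CG}\to 0
\]
lets me compute $R\Gamma(CG,\mathcal{H})=R\Gamma(G,\mathcal{H}\otimes\text{Koszul})$ for any restriction $\mathcal{H}$ of a homogeneous bundle. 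Each term is then a direct sum of irreducible $GL_7$-homogeneous bundles on $G$, whose cohomology is governed by the Borel--Bott--Weil theorem. So the concrete procedure is: for each required pair, write $\mathcal{H}om(F,G(j))$ as a bundle on $G$, tensor it with each Koszul term $\wedge^i(\wedge^3T)$, decompose into irreducibles via plethysm, and apply Bott to check which pieces are acyclic and how the surviving cohomology assembles through the Koszul differentials.

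The main obstacle I anticipate is bookkeeping rather than conceptual difficulty: the decompositions of $\mathcal{H}om(F,G)\otimes \wedge^i(\wedge^3T)$ into $GL$-irreducibles produce many summands, and a given cohomology class can survive in one Koszul term only to be cancelled by the differential against an adjacent term. Thus proving \emph{vanishing} of a total $\mathrm{Ext}$ is delicate when nonzero Bott contributions appear in several columns of the Koszul complex: one must control the hypercohomology spectral sequence and ensure the differentials induce isomorphisms between the potentially offending groups. The presence of $\mathfrak{sl}(Q)=\mathcal{E}nd_0(Q)$ in $\cC_0$ is the subtlest entry, since it is not a line-bundle twist and contributes several Schur functors; verifying its exceptionality ($\mathrm{Ext}^\bullet(\mathfrak{sl}(Q),\mathfrak{sl}(Q))=\CC$ in degree $0$) and its semiorthogonality against the twisted blocks will require the most careful Bott analysis. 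Once all these vanishings are in hand, the Lefschetz nesting is immediate from the way the blocks were defined, and the proposition follows. I would finally remark that fullness---that $\cC$ \emph{generates} $D^b(CG)$---is a strictly stronger statement that the term ``Lefschetz exceptional collection'' in the proposition does not claim, so for this statement it suffices to establish the exceptionality and semiorthogonality just described.
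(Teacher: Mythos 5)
Your proposal takes essentially the same route as the paper: the paper's proof is precisely a reduction of the required $\mathrm{Ext}$-vanishings to the ambient Grassmannian $G(4,7)$ via the Koszul resolution of $\cO_{CG}$ (as the zero locus of a general section of $\wedge^3T^\vee$) followed by a Borel--Weil--Bott computation, which is exactly the strategy you lay out, including the observation that the Lefschetz nesting is immediate from the definition of the blocks. You are also right that fullness is not part of the claim --- the paper explicitly states after the proposition that fullness is only expected, not proved.
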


Here "Lefschetz" refers to the particular structure of the collection \cite{ku}: we 
have $\cC_0\supset \cC_1\supset \cC_2\supset \cC_3$ and $\cC_i(i)$ is the result of 
twisting the 
objects in $\cC_i$  by $\cO_{CG}(i)$. 

Of course we expect $\cC$ to be full, in the sense that it should generate the full 
derived category. In order to prove such a statement, it can be useful to have
a good birational model of the variety under consideration. We will end this paper 
by providing such a simple model. 

\subsection{A birationality}

Consider a decomposition $V_7=V_6\oplus V_1$. For any $k$, there is an induced projection 
$\pi_k: G(k,V_7)\dashrightarrow G(k,V_6)$. The exceptional locus of this rational map is the sub-Grassmannian
of $k$-planes containing $V_1$, which is isomorphic to $G(k-1,V_6)$. Let $\tilde{G}(k,V_7)$
be the variety of pairs $(V_k, U_k)$ of $k$-planes with $U_k\subset V_6$ and $V_k\subset
U_k\oplus V_1\subset V_7$. The two projections induce a diagram 
$$\xymatrix{ & \tilde{G}(k,V_7)\ar[dl]_p\ar[r]^{\sim} & G(k,T\oplus V_1) \ar[d]^q \\ 
G(k,V_7)\ar@{-->}[rr]^{\pi_k} && G(k,V_6),}$$
where $p$ is the blow-up of $G(k-1,V_6)$ and $q$ is a relative Grassmannian. 

\smallskip
Recall that $CG$ is defined by a three-form $\Omega$ on $V_7$. We let $\omega$ denote its restriction to $V_6$. Moreover, we can fix a generator $e$ of $V_1$, and get on $V_6$ the two-form $\alpha=\Omega(e,\bullet,\bullet)$. For a general choice of $V_1$, this is a non
degenerate two-form. The following result is Proposition 6.3 in \cite{ku3}. 

\begin{proposition}
The projection $\pi_2$ sends the adjoint variety $G_2^{ad}$ to its image $Y_2\subset
IG_{\alpha}(2,6)\subset G(2,V_6)$. 
\end{proposition}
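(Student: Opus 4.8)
The plan is to make the adjoint variety explicit inside $G(2,V_7)$ and then reduce the statement to a one-line computation. First I would recall the standard model of $G_2^{ad}$ as a variety of \emph{null planes}. Under the decomposition of $G_2$-modules $\wedge^2V_7=\fg_2\oplus V_7$, a decomposable bivector $u\wedge v$, with $V_2=\langle u,v\rangle$, lies in the summand $\fg_2$ exactly when its $V_7$-component vanishes; equivalently, the one-form $\Omega(u,v,-)\in V_7^*$ is zero. Note that this one-form automatically annihilates $V_2$ (since $\Omega(u,v,u)=\Omega(u,v,v)=0$), so it is a section of $\wedge^2R^*\otimes(V_7/R)^*$, where $R$ denotes the tautological rank two subbundle on $G(2,V_7)$. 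This bundle has rank five, so the zero locus has expected dimension $10-5=5$, matching $\dim G_2^{ad}$. The identification $G_2^{ad}=\{V_2:\Omega(u,v,-)=0\}$ is then the model I would use; this is the fact I import from the $G_2$-geometry of \cite{cg,ku3}.

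Second, I would carry out the projection. Let $V_2\in G_2^{ad}$ lie in the domain of $\pi_2$, i.e. $e\notin V_2$, and pick a basis $u=u_0+se$, $v=v_0+te$ with $u_0,v_0\in V_6$, so that $U_2:=\pi_2(V_2)=\langle u_0,v_0\rangle$. Since $\Omega$ is alternating, the terms involving two copies of $e$ drop out and $\Omega(u,v,e)=\Omega(u_0,v_0,e)$; by the cyclic symmetry of a three-form this equals $\Omega(e,u_0,v_0)=\alpha(u_0,v_0)$. As $V_2$ is a null plane we have $\Omega(u,v,-)\equiv 0$, hence in particular $\Omega(u,v,e)=0$, and therefore $\alpha(u_0,v_0)=0$. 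Thus $U_2$ is $\alpha$-isotropic, so $\pi_2(V_2)\in IG_{\alpha}(2,6)$; letting $Y_2$ be the closure of the image of $\pi_2|_{G_2^{ad}}$, we conclude $Y_2\subset IG_{\alpha}(2,6)$.

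The genuinely substantive point is the first step: justifying that the null-plane locus is really $G_2^{ad}$ and not merely some $G_2$-stable subvariety. For this I would observe that the locus is $G_2$-invariant, projective, and of dimension five (as the zero scheme of the rank five bundle above), hence contains the unique closed $G_2$-orbit; comparing dimensions then forces equality. Everything else is routine: the indeterminacy locus of $\pi_2$ on $G_2^{ad}$ consists of null planes containing $e$, a proper closed subset, so restricting to its complement and passing to closures is harmless, and $IG_{\alpha}(2,6)$ is a genuine isotropic Grassmannian precisely because $\alpha$ is non-degenerate for general $V_1$, as already noted in the excerpt. The isotropy computation itself requires no genericity assumption on $V_2$.
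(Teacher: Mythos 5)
Your isotropy computation is correct and is essentially the same as the paper's (the paper writes $V_2$ as a graph $U_2^\phi=\{x+\phi(x)\}$ over $U_2\subset V_6$, you write a basis $u_0+se$, $v_0+te$; plugging $e$ into the null-plane condition and using $\alpha=\Omega(e,\bullet,\bullet)$ is the same step). The genuine gap is in how you treat the indeterminacy locus. The proposition asserts that $\pi_2$ \emph{sends} $G_2^{ad}$ to its image, i.e.\ that $\pi_2$ restricted to $G_2^{ad}$ is an honest morphism, and the entire first paragraph of the paper's proof is devoted to exactly this: if a null plane had the form $V_2=U_1\oplus V_1$, then taking $v_2'=e$ in the condition $\Omega(v_2,v_2',\bullet)=0$ gives $\alpha(v_2,\bullet)=0$ for $v_2$ spanning $U_1$, and non-degeneracy of $\alpha$ forces $v_2=0$, a contradiction; hence $G_2^{ad}$ does not meet the exceptional locus at all. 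You instead declare that locus ``a proper closed subset'' and pass to closures. That proves only the weaker statement that the closure of the image of a rational map lies in $IG_\alpha(2,6)$, which is not what the proposition says, and which would not support how it is used afterwards: in the birationality proposition the paper identifies the exceptional locus of $q^{-1}$ with $G_2^{ad}$ itself, which needs $\pi_2|_{G_2^{ad}}$ to be everywhere defined and injective (the paper gets injectivity from the unique determination of $\phi$; this too is absent from your proof, though it is not in the displayed statement). The frustrating part is that the missing step is precisely the computation you already perform — take the second basis vector to be $e$ and invoke non-degeneracy of $\alpha$ — but you only use non-degeneracy to remark that $IG_\alpha(2,6)$ is a genuine isotropic Grassmannian.

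A secondary point: your attempted justification of the null-plane model of $G_2^{ad}$ by dimension count runs the wrong way. The zero locus of a section of a rank-five bundle on the ten-dimensional $G(2,V_7)$ has every irreducible component of dimension \emph{at least} five, not at most five, so ``comparing dimensions forces equality'' does not follow: a $G_2$-invariant component could strictly contain the five-dimensional closed orbit. Since both you and the paper ultimately take this characterization from the literature (it is recalled in \cite{cg}, citing Landsberg--Manivel), this is not fatal, but the dimension argument as stated should be dropped or replaced by an appeal to that reference.
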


\begin{proof} The intersection of $G_2^{ad}$ with the exceptional locus of $\pi_2$ is the 
variety of planes of the form $V_2=U_1\oplus V_1$ for which  $\Omega(v_2,v'_2,\bullet)=0$
for any basis $v_2,v'_2$. Choosing $v'_2=e$, this amounts to $\alpha(v_2,\bullet)=0$. 
Equivalently, $v_2$ would belong to the kernel of $\alpha$; but this kernel is zero, so the 
restriction of $\pi_2$ to $G_2^{ad}$ is well-defined.

So any point of $G_2^{ad}$ is of the form 
$V_2=U_2^\phi=\{x+\phi(x), \; x\in U_2\}$ for some $U_2\subset V_6$ and 
$\phi\in Hom(U_2,V_1)$. The condition for $V_2$ to belong to $G_2^{ad}$ is that 
$U_2=\langle u,u'\rangle$ is $\alpha$-isotropic and 
\[
\omega(u,u',v)+\phi(u)\alpha(u',v)-
\phi(u')\alpha(u,v)=0 \mbox{ } \mod U_2
\]
for any $v\in V_6$. In particular, since the linear forms
$\alpha(u,\bullet)$ and $\alpha(u',\bullet)$ are linearly independent, $\phi$ is uniquely
determined. So $G_2^{ad}$ is projected bijectively onto its image $Y$ in $G(2,V_6)$. 
\end{proof}

By orthogonality with respect to $\alpha$, $IG_{\alpha}(2,6)$ is mapped isomorphically
to the variety of coisotropic $4$-planes, those $4$-planes 
on which $\alpha$ has rank two.(in other words, they contain their orthogonal $2$-plane). 
Moreover, the image of $Y_2$ in $G(4,V_6)$ 
can be  interpreted as the variety $Y_4$ of pairs $(U_2\subset U_4=U_2^\perp)$
such that $\omega(U_2,U_2,U_4)=0$. 

\begin{proposition} The projection $\pi_4$ maps the Cayley Grassmannian $CG$ birationally 
onto $G(4,V_6)$. 
This birational map is resolved by blowing up a hyperplane section of the Lagrangian 
Grassmannian $Z_3=LG_{\alpha}(3,6)\cap H_\omega$, which yields an isomorphism with the 
blowup of $G(4,V_6)$ along $Y_4$.
\end{proposition}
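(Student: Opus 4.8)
The plan is to analyze the rational map $\pi_4\colon CG\dashrightarrow G(4,V_6)$ directly in terms of the two projections $p$ and $q$ in the blow-up diagram, with $k=4$. First I would pin down where $\pi_4$ fails to be defined on $CG$: the exceptional locus of $\pi_4$ on $G(4,V_7)$ consists of $4$-planes containing $V_1=\langle e\rangle$, and I would intersect this with $CG$. A $4$-plane $A_4\supset V_1$ lying in $CG$ satisfies $\Omega(A_4,A_4,A_4)=0$; writing $A_4=U_3\oplus V_1$ with $U_3\subset V_6$, the form $\Omega(e,\bullet,\bullet)=\alpha$ forces $\alpha(U_3,U_3)=0$ and $\omega(U_3,U_3,U_3)=0$. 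Since $\alpha$ is a nondegenerate two-form on the six-dimensional $V_6$, the condition $\alpha|_{U_3}=0$ says $U_3$ is $\alpha$-Lagrangian, so $U_3\in LG_\alpha(3,6)$; the extra cubic condition $\omega|_{U_3}=0$ cuts out a hyperplane section $H_\omega$. This identifies the indeterminacy/exceptional locus of $\pi_4|_{CG}$ with $Z_3=LG_\alpha(3,6)\cap H_\omega$, matching the statement.

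Next I would show $\pi_4$ is birational onto $G(4,V_6)$ by exhibiting the inverse over a dense open set. A general $U_4\subset V_6$ is coisotropic for $\alpha$ exactly on a closed locus, so generically one uses the graph construction: a point of $\tilde G(4,V_7)$ over $U_4$ is a $4$-plane $V_4\subset U_4\oplus V_1$ projecting isomorphically to $U_4$, i.e. $V_4=U_4^\phi=\{x+\phi(x): x\in U_4\}$ for some $\phi\in\mathrm{Hom}(U_4,V_1)$. I would impose that $U_4^\phi$ lie in $CG$, namely $\Omega(U_4^\phi,U_4^\phi,U_4^\phi)=0$, and expand this using $\Omega|_{V_6}=\omega$ and $\alpha=\Omega(e,\bullet,\bullet)$. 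The $\phi$-linear terms involve $\alpha$ evaluated on $U_4$; because $\alpha$ is nondegenerate, for generic $U_4$ these linear equations in $\phi$ have a unique solution, exactly as in the proof of the preceding proposition for $G_2^{ad}$ and $IG_\alpha(2,6)$. This gives a rational section of $q\circ p^{-1}$ and hence birationality.

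The resolution statement is where the real work lies, and I expect it to be the main obstacle. The claim is that the single blow-up diagram with $k=4$ simultaneously resolves $\pi_4|_{CG}$, with the strict transform $\widetilde{CG}\subset\tilde G(4,V_7)$ mapping under $p$ as the blow-up of $CG$ along $Z_3$ and under $q$ as the blow-up of $G(4,V_6)$ along $Y_4$. I would argue that $\widetilde{CG}=p^{-1}(CG)^{\mathrm{strict}}$ is cut out inside the relative Grassmannian $G(4,T\oplus V_1)$ by the vanishing of $\Omega$ restricted to the universal $4$-plane, and then check the two projections separately. On the $q$-side, the description of $Y_4$ as pairs $(U_2\subset U_4=U_2^\perp)$ with $\omega(U_2,U_2,U_4)=0$ — transported along the isomorphism $IG_\alpha(2,6)\simeq\{$coisotropic $4$-planes$\}$ established just before the statement — should let me identify the fiber of $q$ over $U_4\in Y_4$ with a projective space parametrizing the choices of $\phi$, i.e.\ an exceptional divisor of a blow-up, while $q$ is an isomorphism off $Y_4$ by the uniqueness of $\phi$ above. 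On the $p$-side, I would compare the exceptional divisor of $\widetilde{CG}\to CG$ over $Z_3$ with the exceptional divisor $p^{-1}(G(3,V_6))$ of the ambient blow-up. The delicate point is to verify that the intersection of $CG$ with the ambient blow-up center behaves generically enough that the strict transform and the blow-up of the scheme-theoretic intersection agree — that is, that $Z_3$ is a smooth center with the expected normal bundle and that $\widetilde{CG}$ is smooth along both exceptional loci. I would handle this by a local dimension count: $\dim CG=8$, $\dim Z_3=\dim LG_\alpha(3,6)-1=6-1=5$, and $\dim Y_4=5$ as well (it is the image of $IG_\alpha(2,6)$, a $5$-fold), so both exceptional divisors are divisorial and the codimension-three structure of the blow-ups is consistent, giving the desired isomorphism of the two blow-ups with the common strict transform $\widetilde{CG}$.
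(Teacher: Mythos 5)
Your overall route is the same as the paper's: identify the indeterminacy locus of $\pi_4|_{CG}$ with $Z_3$, prove birationality by solving $\omega+\phi\wedge\alpha=0$ on $U_4$ uniquely when $\alpha|_{U_4}$ has rank four, and then analyze the two projections of $\tilde{C}G$. The genuine gap is in your claim that ``$q$ is an isomorphism off $Y_4$ by the uniqueness of $\phi$ above.'' That uniqueness argument only covers the open locus where $\alpha|_{U_4}$ is nondegenerate. The locus of coisotropic $U_4$ (where $\alpha|_{U_4}$ has rank two) is a seven-dimensional subvariety of $G(4,V_6)$, isomorphic to $IG_\alpha(2,6)$ via $U_4\mapsto U_2=U_4^\perp$, and $Y_4$ is only its five-dimensional subvariety cut out by $\omega(U_2,U_2,U_4)=0$; your parenthetical ``it is the image of $IG_\alpha(2,6)$, a $5$-fold'' conflates the two ($IG_\alpha(2,6)$ is a $7$-fold, and $Y_4$ is the image of $Y_2\simeq G_2^{ad}$). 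Over a coisotropic $U_4$ not in $Y_4$ the equation $\omega+\phi\wedge\alpha=0$ has \emph{no} solution, so there are no graph-type points in the fiber and ``uniqueness'' tells you nothing; one must instead check that the fiber is the single point $U_3\oplus V_1$ where $U_2\subset U_3\subset U_4$ and $\omega|_{U_3}=0$, which is unique precisely because the linear form $\omega(U_2,U_2,\bullet)$ on $U_4/U_2$ is nonzero there. The same two-case analysis (graphs $U_4^\phi$ versus planes $U_3\oplus V_1$) is what identifies the fiber over a point of $Y_4$: the $\phi$'s form an \emph{affine} plane (a torsor under the annihilator of $U_2$ in $U_4^\vee$), compactified by the $\PP^1$ of planes $U_3\oplus V_1$ with $U_2\subset U_3\subset U_4$, giving $\PP^2$ --- not ``a projective space parametrizing the choices of $\phi$.'' Without this analysis on the whole coisotropic locus you have not shown that the exceptional locus of $q$ is exactly $Y_4$, which is the heart of the blow-up claim; as stated, your argument would be equally consistent with $q$ contracting something over the full seven-dimensional coisotropic locus, which is impossible for a birational map and signals that the step is not just incomplete but would fail as written.

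A secondary weakness: the closing ``local dimension count'' cannot upgrade ``birational with $\PP^2$-fibers over codimension-three centers'' to ``is the blow-up.'' A dimension count neither verifies smoothness of $\tilde{C}G$ along the exceptional loci nor identifies the fibers with projectivized normal directions. The paper gets its conclusion by exhibiting the fibers explicitly --- over $V_4=U_3\oplus V_1\in Z_3$ the $p$-fiber is $\{U_4:\, U_3\subset U_4\subset V_6\}\simeq\PP^2$, and over $Y_4$ the $q$-fiber is the $\PP^2$ just described --- and then asserting the blow-up structure; to make either argument fully rigorous one should invoke a contraction criterion (a birational projective morphism of smooth varieties whose exceptional divisor is a $\PP^{c-1}$-bundle over a smooth codimension-$c$ center is the blow-up of that center) or verify the universal property. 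This terseness you share with the paper, but the missing coisotropic case analysis above is a step the paper does carry out and your proposal omits.
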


\begin{proof}
The intersection of $CG$ with the exceptional locus of $\pi_4$ is the 
variety of $4$-planes of the form $V_4=U_3
\oplus V_1$ on which $\Omega$ vanishes identically. Equivalently, 
$\omega$ and $\alpha$ need to vanish on $U_3$, or in other words $U_3$ has 
to belong to $Z_3=LG_{\alpha}(3,6)\cap H_\omega$. The preimage of such a $V_4$ in 
$\tilde{G}(4,V_7)$ is then isomorphic to the variety of $4$-planes $U_4$
such that $U_3\subset U_4\subset V_6$, which yields a projective plane. 
Restricting the previous diagram, we get 
$$\xymatrix{ & \tilde{C}G\ar[dl]_{p_{Z_3}} \ar[dr]^{q} & \\ 
Z_3\subset CG\ar@{-->}[rr]^{\pi_{4|CG}} && G(4,V_6),}$$
where $p_{Z_3}$ is the blow-up of $Z_3$. 

Let us check that the projection $\pi_4$ restricted to $CG$ is birational. 
The preimage of a $4$-plane $U_4\subset V_6$ is the set of $4$-planes $V_4\subset V_7$
of the form $V_4=U_4^\phi$, for $\phi\in Hom(U_4,V_1)$. The condition that $\Omega$ 
vanishes on $V_4$ is
equivalent to the condition that $\omega+\phi\wedge\alpha=0$ on $U_4$. When 
$\alpha_{|U_4}$ has rank four, the wedge product by this two-form yields an 
isomorphism from $U_4^\vee$ to $\wedge^3U_4^\vee$, so that $\phi$ is determined
uniquely. 

This also implies that $q$ has non-trivial fibers only over the locus of those $U_4$'s 
on which $\alpha$ has rank two, which means that $U_4$ is coisotropic; let $U_2=U_4^\perp$.
Suppose that $V_4\subset U_4\oplus V_1$ belongs to $CG$. If $V_4=U_3\oplus V_1$, we need
that $\omega$ and $\alpha$ vanish on $U_3$, and the latter condition means that $U_2\subset U_3\subset U_4$. If $V_4$ is of the form $U_4^\phi$ with $\omega+\phi\wedge\alpha=0$
on $U_4$, then clearly $\omega(U_2,U_2,U_4)=0$, and conversely this condition 
implies the existence of a suitable $\phi$. So the exceptional locus $Y$ of $q^{-1}$
is isomorphic to $G_2^{ad}$. Moreover, $\phi$ is determined in $V_4^\vee$ 
only up to $U_2^\perp$, which means that the fiber contains an affine linear open 
subset of dimension two; so the fiber, being the closure of this affine linear subspace, is
in fact a projective plane. This implies  
that the projection $q$ is in fact the blowup of $Y_4$ in $G(4,V_6)$. \end{proof}

\smallskip
We get the following diagram 
$$\xymatrix{ & \tilde{C}G\ar[dl]_{Bl_{Z_3}} \ar[dr]^{Bl_{Y_4}} & \\ 
Z_3\subset CG\ar@{-->}[rr] && G(4,V_6)\supset Y_4.}$$
Note that $Z_3$ and $Y_4\simeq G_2^{ad}$ are two examples of five dimensional {\it minifolds} \cite{gkms}: their
derived categories admit full exceptional collections of length $6$. 

Indeed Kuznetsov proved in \cite[6.4]{ku}
that the derived category of the adjoint variety $G_2^{ad}$ has a 
Lefschetz decomposition 
$$D(G_2^{ad})=\langle \cY_0, \cY_1(1), \cY_2(2)\rangle ,$$
where $\cY_0=\cY_1=\cY_2=\langle\cO, T^\vee\rangle$, the bundle $T$ being 
the restriction of the rank two tautological bundle from the Grassmannian $G(2,7)$. 

Moreover, a completely similar statement holds for a smooth hyperplane section $Z_3$ of 
the Lagrangian Grassmannian $LG(3,6)$: according to \cite{sa}, 
$$D(Z_3)=\langle \cZ_0, \cZ_1(1), \cZ_2(2)\rangle ,$$
where $\cZ_0=\cZ_1=\cZ_2=\langle\cO, U^\vee\rangle$, the bundle $U$ being 
the restriction of the rank three tautological bundle from the Grassmannian $G(3,6)$.

For the derived category of the Grassmannian $G(2,6)$, several full exceptional
collections are known: the two Kapranov collections, and also the minimal Lefschetz collection 
found by Kuznetsov \cite{kuzig}:
$$D(G(2,6))=\langle \cA_0, \cA_1(1), \cA_2(2), \cA_3(3), \cA_4(4), \cA_5(5)\rangle ,$$
where $\cA_0=\cA_1=\cA_2=\langle\cO, T^\vee, S^2T^\vee\rangle$ and  
$\cA_3=\cA_4=\cA_5=\langle\cO, T^\vee\rangle$. This Lefschetz
collection induces a minimal Lefschetz collection on $IG(2,6)$ \cite{kuzig}. 

\smallskip\noindent {\it Acknowledgements}. We warmly thank Alexander Kuznetsov for 
useful comments.

\vfill

\pagebreak

\appendix
\section*{Appendix: The quantum multiplication table}

Here is the complete quantum multiplication table of the Cayley Grassmannian:

\begin{equation*}
\begin{aligned}
\s_1\s_1  =  \s_2+\s'_2, & \qquad & \s''_4\s_1  =  \s'_5, \\
\s_2\s_1 =  \s_3+3\s'_3, & \qquad & \s_5\s_1  =  \s_6+2\s'_6+q\s'_2,\\
\s'_2\s_1  =  2\s_3+2\s'_3, & \qquad & \s'_5\s_1  =  3\s_6+2\s'_6+q\s_2, \\
\s_3\s_1  =  2\s_4+2\s'_4+2q, & \qquad & \s_6\s_1  =  \s_7+q\s'_3, \\
\s'_3\s_1  =  \s'_4+\s''_4, & \qquad & \s'_6\s_1  =  \s_7+q\s_3, \\
\s_4\s_1  =  2\s_5+q\s_1, & \qquad & \s_7\s_1  =  \s_8+q\s_4+q\s'_4, \\
\s'_4\s_1  =  2\s_5+\s'_5+q\s_1, & \qquad & \s_8\s_1  =  2q\s_5, 
\end{aligned}
\end{equation*}

\begin{equation*}
\begin{aligned}
\s_2\s_2=\s_4+2\s'_4+2\s''_4, & \qquad &  \\
\s'_2\s_2=\s_4+3\s'_4+\s''_4+2q, & & \s'_2\s'_2=3\s_4+3\s'_4+\s''_4+2q, \\
\s_3\s_2=3\s_5+\s'_5+3q\s_1, & & \s'_2\s_3=5\s_5+\s'_5+3q\s_1, \\
\s'_3\s_2=\s_5+\s'_5, & & \s'_2\s'_3=\s_5+\s'_5+q\s_1, \\
\s_4\s_2=\s_6+\s'_6+2q\s'_2, & & \s'_2\s_4=2\s_6+3\s'_6+q\s_2+q\s'_2, \\
\s'_4\s_2=2\s_6+3\s'_6+q\s_2+q\s'_2, & & \s'_2\s'_4=3\s_6+3\s'_6+q\s_2+2q\s'_2 ,\\
\s''_4\s_2=2\s_6+\s'_6, & & \s'_2\s''_4=\s_6+\s'_6+q\s_2, \\
\s_5\s_2=\s_7+2q\s_3+q\s'_3, & & \s'_2\s_5=2\s_7+2q\s_3+2q\s'_3, \\
\s'_5\s_2=3\s_7+q\s_3+2q\s'_3, & & \s'_2\s'_5=2\s_7+2q\s_3+4q\s'_3, \\
\s_6\s_2=\s_8+q\s'_4, & & \s'_2\s_6=q\s_4+q\s'_4+q\s''_4, \\
\s'_6\s_2=2q\s_4+q\s'_4+q^2, & & \s'_2\s'_6=\s_8+q\s_4+2q\s'_4+q^2, \\
\s_7\s_2=3q\s_5+q^2\s_1, & & \s'_2\s_7=3q\s_5+q\s'_5+q^2\s_1, \\
\s_8\s_2=2q\s'_6+q^2\s'_2, & & \s'_2\s_8=2q\s_6+2q\s'_6+q^2\s'_2, \\
\end{aligned}
\end{equation*}

\begin{equation*}
\begin{aligned}
\s_3\s_3=3\s_6+5\s'_6+q\s_2+3q\s'_2 ,& \qquad &  \\
\s'_3\s_3=\s_6+\s'_6+q\s_2+q\s'_2 ,& & \s'_3\s'_3=\s_6+\s'_6 ,\\
\s_4\s_3=2\s_7+2q\s_3+2q\s'_3 ,& & \s'_3\s_4=q\s_3+q\s'_3 ,\\
\s'_4\s_3=2\s_7+3q\s_3+4q\s'_3 ,& & \s'_3\s'_4=\s_7+q\s_3+q\s'_3 ,\\
\s''_4\s_3=q\s_3+2q\s'_3 ,& & \s'_3\s''_4=\s_7 ,\\
\s_5\s_3=\s_8+2q\s_4+3q\s'_4+q\s''_4+q^2 ,& & \s'_3\s_5=q\s_4+q\s'_4+q^2 ,\\
\s'_5\s_3=2q\s_4+4q\s'_4+2q\s''_4+2q^2 ,& & \s'_3\s'_5=\s_8+q\s_4+q\s'_4 ,\\
\s_6\s_3=q\s_5+q\s'_5+q^2\s_1, & & \s'_3\s_6=q\s_5, \\
\s'_6\s_3=3q\s_5+q\s'_5+q^2\s_1, & & \s'_3\s'_6=q\s_5+q^2\s_1, \\
\s_7\s_3=3q\s_6+3q\s'_6+q^2\s_2+q^2\s'_2, & & \s'_3\s_7=q\s'_6+q^2\s'_2, \\
\s_8\s_3=2q\s_7+q^2\s_3+2q^2\s'_3, & & \s'_3\s_8=q^2\s_3, \\
\end{aligned}
\end{equation*}

\begin{equation*}
\begin{aligned}
\s_4\s_4=\s_8+2q\s'_4+q^2 ,& & \s_4\s'_4=2q\s_4+2q\s'_4+q\s''_4+q^2 ,\\
\s'_4\s'_4=\s_8+2q\s_4+3q\s'_4+q\s''_4+2q^2 ,& & \s_4\s''_4=q\s'_4+q^2 ,\\
\s''_4\s''_4=\s_8 ,& & \s'_4\s''_4=q\s_4+q\s'_4 .\\
\s_4\s_5=2q\s_5+q\s'_5+q^2\s_1, & & \s'_5\s_4=2q\s_5+q\s'_5+2q^2\s_1, \\
\s'_4\s_5=3q\s_5+q\s'_5+2q^2\s_1, & & \s'_5\s'_4=4q\s_5+q\s'_5+2q^2\s_1, \\
\s''_4\s_5=q\s_5+q^2\s_1, & & \s'_5\s''_4=2q\s_5, \\
\s_4\s_6=q\s'_6+q^2\s_2, & & \s'_6\s_4=2q\s_6+q\s'_6+q^2\s'_2, \\
\s'_4\s_6=q\s_6+q\s'_6+q^2\s'_2, & & \s'_6\s'_4=q\s_6+2q\s'_6+q^2\s_2+q^2\s'_2, \\
\s''_4\s_6=q\s'_6, & & \s'_6\s''_4=q^2\s'_2, \\
\s_4\s_7=q\s_7+q^2\s_3+2q^2\s'_3, & & \s_8\s_4=q^2\s_4+q^2\s'_4+q^2\s''_4, \\
\s'_4\s_7=q\s_7+2q^2\s_3+2q^2\s'_3, & & \s_8\s'_4=q^2\s_4+2q^2\s'_4+q^3, \\
\s''_4\s_7=q^2\s_3, & & \s_8\s''_4=q^2\s_4+q^3, \\
\end{aligned}
\end{equation*}

\begin{equation*}
\begin{aligned}
\s_5\s_5=2q\s_6+2q\s'_6+q^2\s_2+q^2\s'_2, & & \s_6\s_6=q^2\s_4, \\
\s'_5\s_5=q\s_6+2q\s'_6+q^2\s_2+2q^2\s'_2, & & \s'_6\s_6=q^2\s'_4+q^3, \\
\s'_5\s'_5=2q\s_6+4q\s'_6+2q^2\s'_2, & & \s'_6\s'_6=q^2\s_4+q^2\s'_4+q^2\s''_4, \\
\s_6\s_5=q^2\s_3+q^2\s'_3, & & \s'_5\s_6=q\s_7+q^2\s_3, \\
\s'_6\s_5=q\s_7+q^2\s_3+2q^2\s'_3, & & \s'_5\s'_6=2q^2\s_3+2q^2\s'_3, \\
\s_7\s_5=q^2\s_4+2q^2\s'_4+q^2\s''_4+q^3, & & \s'_5\s_7=2q^2\s_4+2q^2\s'_4+2q^3, \\
\s_8\s_5=q^2\s_5+q^2\s'_5+q^3\s_1, & & \s'_5\s_8=2q^2\s_5+2q^3\s_1, \\
\s_7\s_6=q^2\s_5+q^3\s_1, & & \s'_6\s_7=q^2\s_5+q^2\s'_5+q^3\s_1, \\
\s_8\s_6=q^3\s'_2, & & \s'_6\s_8=q^2\s_6+q^2\s'_6+q^3\s_2, \\
\s_7\s_7=q^2\s_6+q^2\s'_6+q^3\s_2+q^3\s'_2, & & \s_7\s_8=q^3\s_3+2q^3\s'_3, \\
\s_8\s_8=q^3\s'_4+q^3\s''_4+q^4. & &  \\
\end{aligned}
\end{equation*}


\vspace{2cm}

Vladimiro {\sc Benedetti}

D\'epartement de math\'ematiques et applications, ENS

CNRS, PSL University, 75005 Paris, France.

{\tt Vladimiro.Benedetti@ens.fr}

\bigskip

Laurent {\sc Manivel}

Institut de Math\'ematiques de Toulouse, UMR 5219

CNRS, UPS, F-31062 Toulouse Cedex 9, France.

{\tt manivel@math.cnrs.fr}
\end{document}